\theoremstyle{plain}
\newtheorem{theorem}{Theorem}
\newtheorem*{theorem*}{Theorem}
\newtheorem{corollary}{Corollary}
\newtheorem*{corollary*}{Corollary}
\newtheorem{lemma}{Lemma}
\newtheorem*{lemma*}{Lemma}
\newtheorem*{proposition*}{Proposition}
\newtheorem*{conjecture*}{Conjecture}
\theoremstyle{definition}
\newtheorem{definition}{Definition}
\newtheorem*{definition*}{Definition}
\theoremstyle{remark}
\newtheorem{remark}{Remark}
\newtheorem*{remark*}{Remark}
\begin{document}
\title[Fractal sets]{On one class of fractal sets}
\author{Symon Serbenyuk}
\address{Institute of Mathematics \\
 National Academy of Sciences of Ukraine \\
  3~Tereschenkivska St. \\
  Kyiv \\
  01004 \\
  Ukraine}
\email{simon6@ukr.net}

\subjclass[2010]{28A80, 11K55, 26A09}

\keywords{ Fractals,  Hausdorff-Besicovitch dimension, self-similar sets, s-adic representation, normal  numbers.}

\begin{abstract}
 In the present article a new  class  $\Upsilon$ of all sets  represented in the following form is introduced:
$$
\mathbb S_{(s,u)}\equiv\left\{x: x= \Delta^{s}_{{\underbrace{u...u}_{\alpha_1-1}} \alpha_1{\underbrace{u...u}_{\alpha_2 -1}}\alpha_2 ...{\underbrace{u...u}_{ \alpha_n -1}}\alpha_n...},  \alpha_n \in A_0, \alpha_n \ne u, \alpha_n \ne 0 \right\}, 
$$
where $2<s\in \mathbb N$ and $u\in A$  are fixed parameters, $A=\{0,1,\dots,s-1\}$, and $A_0=A \setminus \{0\}$. Topological, metric, and fractal properties of these sets are studied. 
The problem of belonging to these sets of normal numbers is investigated. 
The theorem on the calculation of the Hausdorff-Besicovitch dimension of an arbitrary set whose elements have restrictions on using of  digits or combinations of digits in own s-adic representations is formulated and proved.

\end{abstract}
\maketitle



\section{Introduction}

A fractal in the wide sense is a set whose the topological dimension and the Hausdorff-Besicovitch dimension are different, and in the narrow sense is a set that has the fractional Hausdorff-Besicovitch dimension. In 1977, the notion  ``fractal" was introduced by B.~Mandelbrot in  \cite{Mandelbrot1977} but such sets one can to see in earlier mathematics researches. For example, the Cantor set 
$$
C[3, \{0,2\}]=\{x: x=\Delta^3 _{\alpha_1\alpha_2\ldots \alpha_n\ldots}, \alpha_n\in\{0,2\}\}
$$
 is a fractal. Fractals are applicated in modeling of  biological and physical processes,  and the notion of ``fractals" unites various mathematical objects such as
continuous nowhere monotonic functions, nowhere differentiable functions, singular distributions, etc. \cite{{Falconer1997}, {Falconer2004}}.

 In the present article a new class of certain fractal sets whose elements have a functional  restriction on using of symbols in own 
s-adic representations is introduced, and topological, metric, and fractal properties of such sets are studied. In 2012, the results of this article were presented by the author in the  abstracts
\cite{{S. Serbenyuk abstract 2}, {S. Serbenyuk abstract 3}, {S. Serbenyuk abstract 5}}. 

An expansion of a real number $x_0 \in [0;1]$ in the form
\begin{equation}
\label{eq: s-adic expansion}
x_0=\frac{\alpha_1}{s}+\frac{\alpha_2}{s^2}+\dots+\frac{\alpha_n}{s^n}+\dots ,
\end{equation}
is called \emph{the s-adic expansion of $x_0$}. Here $1<s$ is a fixed positive integer, $\alpha_n \in A=\{0, 1,\dots , s-1\}$.
By $x_0=\Delta^s _{\alpha_1\alpha_2...\alpha_n...}$ denote the s-adic expansion of $x_0$. The  notation $\Delta^s _{\alpha_1\alpha_2...\alpha_n...}$ is called \emph{the s-adic representation of $x_0$}.

A number whose the s-adic representation is periodic with the period $(0)$ is called \emph{an s-adic-rational number}. Any s-adic-rational number has two different s-adic representations, i.e., 
$$
\Delta^s _{\alpha_1\alpha_2...\alpha_{n-1}\alpha_n(0)}=\Delta^s _{\alpha_1\alpha_2...\alpha_{n-1}[\alpha_n-1](s-1)}.
$$
The other numbers in $[0;1]$ are called \emph{s-adic-irrational numbers.} These numbers have the unique s-adic representation.

\section{The object of research}

Let  $2<s$  be a fixed positive integer, $A=\{0,1,...,s-1\}$ be an alphabet of the s-adic number system, 
$A_0=A \setminus \{0\}=\{1,2,\dots , s -1\}$,  and
$$
 L \equiv  (A_0)^{\infty}= (A_0) \times  (A_0) \times  (A_0)\times\dots  
$$
be the space of one-sided sequences of  elements of $ A_0$.

Consider a new class  $\Upsilon_s$ of sets  $\mathbb  S_{(s,u)}$ represented  in the form 
\begin{equation}
\label{S(s,u)1}
\mathbb S_{(s,u)}\equiv\left\{x: x= \Delta^{s}_{{\underbrace{u...u}_{\alpha_1-1}} \alpha_1{\underbrace{u...u}_{\alpha_2 -1}}\alpha_2 ...{\underbrace{u...u}_{ \alpha_n -1}}\alpha_n...},  (\alpha_n) \in L, \alpha_n \ne u, \alpha_n \ne 0 \right\}, 
\end{equation}
where $u=\overline{0,s-1}$, $u$ and $s$ are fixed for the set $\mathbb  S_{(s,u)}$. That is the class $\Upsilon_s$ contains the sets  $\mathbb  S_{(s,0)}, \mathbb  S_{(s,1)},\dots,\mathbb  S_{(s,s-1)}$. We say that   $\Upsilon$ is a class of sets such that contains the classes   $\Upsilon_3, \Upsilon_4,\dots ,\Upsilon_n,\dots$.

It is easy to see that the set  $\mathbb  S_{(s,u)}$ can be defined by the s-adic expansion in the following form
\begin{equation}
\label{S(s,u)2}
\mathbb S_{(s,u)}\equiv \left\{x: x=\frac{u}{s-1} +\sum^{\infty} _{n=1} {\frac{\alpha_n - u}{s^{\alpha_1+\dots+\alpha_n}}}, (\alpha_n) \in L, \alpha_n \ne u, \alpha_n \ne 0  \right\}.
\end{equation}

\section{Topological and metric properties of  $\mathbb  S_{(s,0)}$}

By $x_0=\Delta^{'} _{c_1c_2...c_n...}$ denote the following  equality
$$
\mathbb  S_{(s,0)}\ni x_0 =\frac{c_1}{s^{c_1}}+\frac{c_2}{s^{c_1+ c_2}}+\frac{c_3}{s^{c_1+ c_2+c_3}}+\dots+\frac{c_n}{s^{c_1+ c_2+\dots+c_n}}+\dots .
$$

 It follows from the definition of the set  $\mathbb  S_{(s,0)}$ that s-adic-rational numbers do not belong to $\mathbb  S_{(s,0)}$. Hence  each element of $\mathbb  S_{(s,0)}$ has the unique s-adic representation.
\begin{lemma}
The set $\mathbb  S_{(s,0)}$  is a uncountable set.
\end{lemma}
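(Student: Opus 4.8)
The plan is to set up an explicit bijection between $L=(A_0)^{\infty}$ and $\mathbb{S}_{(s,0)}$ and then invoke the uncountability of $L$. Since $2<s$, the alphabet $A_0=\{1,2,\ldots,s-1\}$ has at least two elements, so $L$ is the set of all one-sided sequences over an alphabet of size $\ge 2$; by Cantor's diagonal argument (or by comparison with $\{1,2\}^{\infty}$) it has the cardinality of the continuum and is in particular uncountable. Transporting this uncountability across a bijection will give the claim.

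First I would define $\varphi\colon L\to\mathbb{S}_{(s,0)}$ by sending $(\alpha_n)\in L$ to the number $x=\Delta'_{\alpha_1\alpha_2\ldots}$ determined by the series in \eqref{S(s,u)2} (with $u=0$). Because every $\alpha_n\ge 1$, the exponents $\alpha_1+\cdots+\alpha_n$ tend to infinity, so the series converges to a point of $[0;1]$, and by the very definition \eqref{S(s,u)1} this point lies in $\mathbb{S}_{(s,0)}$; moreover $\varphi$ is surjective, again directly from \eqref{S(s,u)1}. It then remains to prove that $\varphi$ is injective.

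For injectivity the key observation is that the s-adic representation attached to $(\alpha_n)$ is the concatenation of the blocks $\underbrace{0\ldots0}_{\alpha_n-1}\alpha_n$, so that the nonzero digits of this representation, listed in the order in which they occur, are exactly $\alpha_1,\alpha_2,\ldots$. Hence the sequence $(\alpha_n)$ can be read back off from the representation string, which shows that distinct sequences in $L$ produce distinct s-adic representations. Since it was already established that no s-adic-rational number belongs to $\mathbb{S}_{(s,0)}$, every element of $\mathbb{S}_{(s,0)}$ has a \emph{unique} s-adic representation; therefore distinct s-adic representations correspond to distinct real numbers, and $\varphi$ is injective. Consequently $\varphi$ is a bijection and $\mathbb{S}_{(s,0)}$ inherits the cardinality of the continuum, hence is uncountable.

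The only delicate point — and the one place where the hypotheses must really be used — is this last injectivity step. A priori two different digit sequences could collapse to the same real number through the s-adic-rational ambiguity $\Delta^s_{\ldots\alpha_n(0)}=\Delta^s_{\ldots[\alpha_n-1](s-1)}$; it is precisely the exclusion of s-adic-rational numbers from $\mathbb{S}_{(s,0)}$ that rules this out. Everything else is routine.
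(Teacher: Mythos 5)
Your proof is correct and takes essentially the same route as the paper's: the paper reads off the nonzero digits to obtain a bijection $f\colon \mathbb{S}_{(s,0)}\to C[s,A_0]$ and quotes the uncountability of $C[s,A_0]$, while you use the very same digit correspondence in the opposite direction, as a bijection $L\to\mathbb{S}_{(s,0)}$, and quote the uncountability of the sequence space $L$. The only difference of substance is that you spell out the injectivity step (recovering the sequence from the representation, plus uniqueness of the s-adic representation coming from the exclusion of s-adic-rational numbers), which the paper asserts but leaves implicit.
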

\begin{proof} Let  the mapping $f: \mathbb  S_{(s,0)} \to C[s,A_0]$, where $C[s,A_0]=\{x: x=\Delta^s _{\alpha_1\alpha_2\ldots\alpha_n\ldots}, \alpha_n\in~A_0\}$,  be given by
\begin{equation}
\label{form1}
\forall (\alpha_n)\in L:x=\sum^{\infty} _{n=1} {\frac{\alpha_n}{s^{\alpha_1+ \alpha_2+...+\alpha_n}}}  \to \sum^{\infty} _{n=1} {\frac{\alpha_n}{s^n}}=y=f(x),
\end{equation}
i.e.,
\begin{equation}
\label{form2}
x= \Delta^ {s}_{\underbrace{0...0}_{c_1 - 1}c_1 \underbrace{0...0}_{c_2 - 1}c_2...\underbrace{0...0}_{c_n - 1}c_n...} \to \Delta^{s} _{c_1c_2...c_n...}=y=f(x).
\end{equation}
From (\ref{form2}) it follows that (\ref{form1}) is a bijection. Since $ C[s,A_0] $ is a  uncountable set, we see that  $\mathbb  S_{(s,0)}$ is a  uncountable set.
\end{proof}

To investigate topological and metric properties of  $\mathbb  S_{(s,0)}$, we shall study properties of cylinders. 

\begin{definition}

  \emph{A cylinder $ \Delta' _{c_1c_2...c_n} $ of rank   $ n $ with base $ c_1 c_2\ldots c_n $  } is a set formed by all numbers of $\mathbb  S_{(s,0)}$ with s-adic representations in which the first $ n $  non-zero digits are fixed and coincide with
$ c_1,c_2,\dots ,c_n, $ respectively.
\end{definition}
\begin{lemma}
\label{lemma2}
Cylinders have the following properties:
\begin{enumerate}
\item \label{1}
$$
\inf \Delta^{'} _{c_1c_2...c_n}=\Delta^{'} _{c_1c_2...c_n(s-1)}=g_n+\frac{s-1}{s^{c_1+c_2+\dots+c_n}(s^{s-1}-1)},
$$
$$
\sup  \Delta' _{c_1c_2...c_n}=\Delta^{'} _{c_1c_2...c_n(1)}=g_n+\frac{1}{s^{c_1+c_2+\dots+c_n}(s-1)},
$$
$$
\mbox{where} ~g_n=\sum _{k=1} ^n \frac{c_k}{s^{c_1+c_2+\dots+c_k}}.
$$
\item \label{2} 
$$
 \Delta' _{c_1c_2...c_n} \subset I_{c_1c_2...c_n},
$$
 where  endpoints of the segment $ I_{c_1c_2\ldots c_n}$ coincide with  endpoints of  $\Delta^{'} _{c_1c_2\ldots c_n}$, i.e., 
\begin{equation}
    \label{eq:  000}
I_{c_1c_2...c_n}=\left[g_n+\frac{s-1}{(s^{s-1}-1)s^{c_1+c_2+\dots+c_n}};g_n+\frac{1}{(s-1)s^{c_1+c_2+\dots+c_n}}\right].
\end{equation}
\item \label{3}
$$
\Delta' _{c_1c_2...c_n}\subset \Delta^s _{\underbrace{0...0}_{c_1 - 1}c_1 \underbrace{0...0}_{c_2 - 1}c_2...\underbrace{0...0}_{c_n - 1}c_n},~ c_i \in A_0.
$$
\item \label{4}
$$
  \Delta^{'} _{c_1c_2...c_n} =\bigcup^{s-1} _{i=1} { \Delta^{'} _{c_1c_2...c_ni}}~~~\forall c_n \in A_0,~~~n \in \mathbb N.
$$
\item \label{5}
Suppose that $d(\cdot)$ is the diameter of a set. Then 
$$
d(\Delta' _{c_1c_2...c_n})=\frac{s^{s-1}-1-(s-1)^2}{(s-1)(s^{s-1}-1)s^{c_1+c_2+\dots+c_n}}.
$$
\item \label{6}
$$
\frac{d(\Delta' _{c_1c_2...c_{n+1}})}{d(\Delta' _{c_1c_2...c_n})}=\frac{1}{s^{c_{n+1}}}.
$$
\item \label{7} The following inequality 
$$
\inf \Delta' _{c_1c_2...c_{n-1}p} > \sup  \Delta' _{c_1c_2...c_{n-1} {[p+1]}}
$$
holds for all $p \in N^1 _{s-2}=\{1,2,...,s-2\}$.
\item \label{8}
An arbitrary interval of the form
$$
T_{c_1...c_{n-1}p}=(\sup \Delta' _{c_1...c_{n-1}[p+1]};\inf \Delta'_{c_1...c_{n-1}p})=(b_{p+1};a_p),
$$
where 
$$
b_{p+1}=\frac{p+1}{s^{c_1+\dots+c_{n-1}+p+1}}+\frac{1}{(s-1)s^{c_1+\dots+c_{n-1}+p+1}}+\sum _{k=1} ^{n-1} \frac{c_k}{s^{c_1+c_2+\dots+c_k}}
$$
and
$$
a_p=\frac{p}{s^{c_1+\dots+c_{n-1}+p}}+\frac{s-1}{(s^{s-1}-1)s^{c_1+dots+c_{n-1}+p}}+ \sum _{k=1} ^{n-1} \frac{c_k}{s^{c_1+c_2+dots+c_k}},
$$
satisfies the condition $T_{c_1...c_{n-1}p}\cap \mathbb  S_{(s,0)}=\varnothing$.
\item \label{9}
$$
\Delta^{'} _{c_1c_2...c_{n-1}p} \cap \Delta^{'} _{c_1c_2...c_{n-1}[p+1]}= \varnothing  ~\forall p\in N^1 _{s-2}.
$$
\item \label{10}
If $ x_0 \in \mathbb  S_{(s,0)}$, then
$$
x_0=\bigcap_ {n=1}^{\infty} {\Delta' _{c_1c_2...c_n}}.
$$
\end{enumerate}
\end{lemma}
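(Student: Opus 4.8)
My plan rests on the self-similarity of $\mathbb S_{(s,0)}$. If $x\in\mathbb S_{(s,0)}$ has nonzero digits $c_1c_2\dots$, then $x=\frac{c_1}{s^{c_1}}+\frac{1}{s^{c_1}}\tilde x$ with $\tilde x=\Delta'_{c_2c_3\dots}\in\mathbb S_{(s,0)}$, so $\mathbb S_{(s,0)}=\bigcup_{p=1}^{s-1}\left(\frac{p}{s^p}+\frac{1}{s^p}\mathbb S_{(s,0)}\right)$ and, after factoring out a prefix, $\Delta'_{c_1\dots c_n}=g_n+s^{-(c_1+\dots+c_n)}\,\mathbb S_{(s,0)}$. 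Consequently all the extremal data in (\ref{1}) reduce to the two numbers $m=\inf\mathbb S_{(s,0)}$ and $M=\sup\mathbb S_{(s,0)}$, via $\inf\Delta'_{c_1\dots c_n}=g_n+s^{-(c_1+\dots+c_n)}m$ and $\sup\Delta'_{c_1\dots c_n}=g_n+s^{-(c_1+\dots+c_n)}M$.

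To compute $m$ and $M$, the decomposition yields the fixed-point equations $M=\max_{1\le p\le s-1}\frac{p+M}{s^p}$ and $m=\min_{1\le p\le s-1}\frac{p+m}{s^p}$. The key auxiliary fact is that for fixed $c\ge0$ the function $\psi(p)=\frac{p+c}{s^p}$ is strictly decreasing on $[1,\infty)$ once $s\ge3$, since $\psi'(p)<0$ is equivalent to $(p+c)\ln s>1$, which holds because $\ln s\ge\ln3>1$. This is exactly where the standing hypothesis $2<s$ enters. Hence the maximum is attained at $p=1$ and the minimum at $p=s-1$, giving $M=\frac{1}{s-1}$ (realized by the all-ones tail) and $m=\frac{s-1}{s^{s-1}-1}$ (realized by the all-$(s-1)$ tail). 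Substituting into the shift identity proves (\ref{1}); then (\ref{2}) (each cylinder lies in its interval), (\ref{5}) (diameter $=\sup-\inf$) and (\ref{6}) (ratio $s^{-c_{n+1}}$) are direct substitutions, while (\ref{3}) and (\ref{4}) follow at once from the definition of the representation and from the fact that the $(n+1)$-th nonzero digit ranges over $A_0$.

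For the separation statements I would first prove (\ref{7}) by inserting the formulas of (\ref{1}) and clearing the common factor $s^{c_1+\dots+c_{n-1}}$ together with $s^{p+1}$; the inequality reduces to $p(s-1)-1+\frac{(s-1)s}{s^{s-1}-1}-\frac{1}{s-1}>0$, which is evident for $s\ge3$ and $p\ge1$. Property (\ref{9}) is then immediate, since by (\ref{2}) each cylinder lies in its interval and (\ref{7}) shows these intervals are strictly separated. The substantive step is (\ref{8}), which I would reach through an \emph{ordering principle}: if $x=\Delta'_{q_1q_2\dots}$ and $y=\Delta'_{q'_1q'_2\dots}$ first differ at index $k$ with $q_k<q'_k$, then $x>y$. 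This follows from the shift identity, because after factoring the common prefix the sign of $x-y$ equals that of $\frac{q_k+X}{s^{q_k}}-\frac{q'_k+Y}{s^{q'_k}}$ with $X,Y\in[m,M]$, and monotonicity of $\psi$ together with (\ref{7}) gives $\frac{q_k+m}{s^{q_k}}>\frac{q'_k+M}{s^{q'_k}}$ whenever $q'_k>q_k$.

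Granting the ordering principle, (\ref{8}) is a case analysis: a point $y\in\mathbb S_{(s,0)}$ lying in $T_{c_1\dots c_{n-1}p}=(b_{p+1};a_p)$ must share the prefix $c_1\dots c_{n-1}$, for otherwise its first deviation pushes $y$ either above $\sup\Delta'_{c_1\dots c_{n-1}}\ge a_p$ or below $\inf\Delta'_{c_1\dots c_{n-1}}\le b_{p+1}$; and once the prefix is shared, its $n$-th nonzero digit $q$ satisfies either $q\le p$, forcing $y\ge\inf\Delta'_{c_1\dots c_{n-1}p}=a_p$, or $q\ge p+1$, forcing $y\le\sup\Delta'_{c_1\dots c_{n-1}[p+1]}=b_{p+1}$, each contradicting $y\in(b_{p+1};a_p)$. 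Finally (\ref{10}) follows from (\ref{4}) and (\ref{5}): the cylinders containing $x_0$ are nested, and since $c_1+\dots+c_n\ge n\to\infty$ their diameters tend to $0$, so the nested-interval theorem forces their intersection to be the single point $x_0$. The main obstacle is the ordering principle underlying (\ref{8}): it is the only place demanding a comparison across different digit-prefixes rather than a computation inside one cylinder, and it is precisely here that $s>2$ (through the monotonicity of $\psi$) is indispensable.
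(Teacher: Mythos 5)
Your proposal is correct, but it is organized around two ideas the paper does not use, so it is worth comparing. For the extrema (Properties \ref{1}, \ref{2}, \ref{5}, \ref{6}) the paper simply sums the geometric series of the two extremal tails, justifying the choice of tails by the monotonicity of $a\mapsto a/s^a$; you instead exploit the exact self-similarity $\Delta'_{c_1\dots c_n}=g_n+s^{-(c_1+\dots+c_n)}\,\mathbb S_{(s,0)}$ and recover $m=\inf\mathbb S_{(s,0)}$, $M=\sup\mathbb S_{(s,0)}$ from the fixed-point equations $M=\max_p\frac{p+M}{s^p}$, $m=\min_p\frac{p+m}{s^p}$. This is cleaner, and it makes explicit the self-similar structure that the paper only invokes later in the dimension theorem. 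The larger divergence is Property \ref{8}: the paper proves it via the two endpoint inequalities $\inf\Delta'_{c_1\dots c_{n-1}p}\le\inf\Delta'_{c_1\dots c_{n-1}pc_{n+1}}$ and $\sup\Delta'_{c_1\dots c_{n-1}[p+1]}\ge\sup\Delta'_{c_1\dots c_{n-1}[p+1]c_{n+1}}$ (descent to child cylinders never escapes the parent's bounds; as a by-product the paper identifies the equality cases $c_{n+1}=s-1$ and $c_{n+1}=1$), whereas you prove a first-differing-digit ordering principle and run a three-way case analysis (prefix broken; $n$-th nonzero digit $\le p$; $n$-th nonzero digit $\ge p+1$). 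Your route is arguably more complete, since it explicitly disposes of points of $\mathbb S_{(s,0)}$ that do not share the prefix $c_1\dots c_{n-1}$, a case the paper leaves implicit. One spot to tighten: in the ordering principle, monotonicity of $\psi$ alone cannot compare $\frac{q_k+m}{s^{q_k}}$ with $\frac{q'_k+M}{s^{q'_k}}$, because the additive constants differ; what you actually need is a chain of Property \ref{7} through the intermediate digits, $\inf\Delta'_{q_k}>\sup\Delta'_{q_k+1}\ge\inf\Delta'_{q_k+1}>\cdots\ge\sup\Delta'_{q'_k}$, and you should say so. Your reduction of Property \ref{7} to $p(s-1)-1+\frac{s(s-1)}{s^{s-1}-1}-\frac{1}{s-1}>0$ agrees with the paper's displayed numerator after multiplication by $(s-1)(s^{s-1}-1)$, so that computation checks out.
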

\begin{proof}
Prove that \emph{the first property} is true. Since  $\frac{a}{s^a}>\frac{b}{s^b}$ holds for  $a<b$, we have
$$
\inf \Delta^{'} _{c_1c_2...c_n}=\frac{1}{s^{c_1+c_2+\dots+c_n}}\left(\frac{s - 1}{s^{s - 1}}+\frac{s-1}{s^{2(s-1)}}+\dots\right)+\sum _{k=1} ^n \frac{c_k}{s^{c_1+c_2+\dots+c_k}}=
$$
$$
=\left(\sum _{k=1} ^n \frac{c_k}{s^{c_1+c_2+...+c_k}}\right)+\frac{s-1}{s^{c_1+c_2+\dots+c_n}(s^{s-1}-1)}=\Delta^{'} _{c_1c_2...c_n(s-1)},
$$
$$
\sup  \Delta' _{c_1c_2...c_n}
=\frac{1}{s^{c_1+c_2+\dots+c_n}}\left( \frac{1}{s}+\frac{1}{s^2}+\dots\right) +\sum _{k=1} ^n \frac{c_k}{s^{c_1+c_2+\dots+c_k}}=
$$
$$
=\left(\sum _{k=1} ^n \frac{c_k}{s^{c_1+c_2+\dots+c_k}}\right)+\frac{1}{s^{c_1+c_2+\dots+c_n}(s-1)}=\Delta^{'} _{c_1c_2...c_n(1)}.
$$
\emph{The second property} follows from Property 1.  \emph{Property 3} follows from the first and the second properties, and  
$$
\Delta^{s} _{\underbrace{0...0}_{c_1 - 1}c_1 \underbrace{0...0}_{c_2 - 1}c_2...\underbrace{0...0}_{c_n - 1}c_n}=\left[\sum _{k=1} ^n \frac{c_k}{s^{c_1+c_2+\dots+c_k}};\frac{1}{s^{c_1+c_2+\dots+c_n}}+\sum _{k=1} ^n \frac{c_k}{s^{c_1+c_2+\dots+c_k}}\right].
$$

\emph{Properties 4-6} follow from the last properties.

Let us prove that \emph{Property 7} is true. Consider the difference 
$$
\inf \Delta' _{c_1c_2...c_{n-1}p} - \sup  \Delta' _{c_1c_2...c_{n-1} {[p+1]}}=\left(\sum _{k=1} ^{n-1} \frac{c_k}{s^{c_1+c_2+\dots+c_k}}\right)+\frac{p}{s^{c_1+c_2+\dots+c_{n-1}+p}}+
$$
$$
+\frac{1}{s^{c_1+c_2+...+c_{n-1}+p}}\cdot \frac{s-1}{s^{s-1}-1}-\left(\sum _{k=1} ^{n-1} \frac{c_k}{s^{c_1+c_2+\dots +c_k}}\right)-\frac{p+1}{s^{c_1+c_2+\dots+c_{n-1}+p+1}}-
$$
$$
-\frac{1}{s^{c_1+c_2+\dots+c_{n-1}+p+1}}\cdot \frac{1}{s-1}
=\frac{(s^{s-1}-1)(p(s-1)^2-s)+s(s-1)^2}{(s-1)(s^{s-1}-1)s^{c_1+c_2+\dots+c_{n-1}+p+1}}>0.
$$

For proving \emph{Property 8}, let us show that the following inequalities hold. 
$$
1) \inf \Delta' _{c_1...c_{n-1}p}- \inf \Delta' _{c_1c_2...c_{n-1}pc_{n+1}}\le 0, ~ p \in N_{s-2} ^1.
$$
$$
2)  \sup \Delta' _{c_1...c_{n-1} {[p+1]}}-\sup \Delta' _{c_1c_2...c_{n-1}[p+1]c_{n+1}} \ge 0, ~ p \in N_{s-2} ^1.
$$
$$
1) \left( \sum _{k=1} ^{n-1} \frac{c_k}{s^{c_1+c_2+\dots+c_k}}\right)+\frac{p}{s^{c_1+\dots+c_{n-1}+p}}+\frac{s-1}{(s^{s-1}-1)s^{c_1+\dots+c_n+c_{n-1}+p}}-
$$
$$
-\left(\sum _{k=1} ^{n-1} \frac{c_k}{s^{c_1+c_2+...+c_k}}\right)-\frac{p}{s^{c_1+\dots+c_{n-1}+p}}-\frac{c_{n+1}}{s^{c_1+\dots+c_{n-1}+p+c_{n+1}}}-
$$
$$
-\frac{1}{s^{c_1+\dots+p+c_{n+1}}}\cdot \frac{s-1}{s^{s-1}-1}=\frac{(s-1)(s^{c_{n+1}}-1)-c_{n+1}(s^{s-1}-1)}{(s^{s-1}-1)s^{c_1+\dots+c_{n-1}+p+c_{n+1}}} \le 0.
$$
This inequality is an equality whenever $c_{n+1}=s-1$.

 Similarly,
$$
2) \left(\sum _{k=1} ^{n-1} \frac{c_k}{s^{c_1+c_2+\dots+c_k}}\right)+\frac{p+1}{s^{c_1+\dots+c_{n-1}+p+1}}+\frac{1}{(s-1)s^{c_1+\dots+c_n+c_{n-1}+p+1}}-
$$
$$
-\left(\sum _{k=1} ^{n-1} \frac{c_k}{s^{c_1+c_2+\dots+c_k}}\right)-\frac{p+1}{s^{c_1+\dots+c_{n-1}+p+1}}-\frac{c_{n+1}}{s^{c_1+\dots+c_{n-1}+p+1+c_{n+1}}}-
$$
$$
-\frac{1}{s^{c_1+\dots+c_{n-1}+p+1+c_{n+1}}}\cdot \frac{1}{s-1}=\frac{s^{c_{n+1}}-c_{n+1}(s-1)-1}{(s-1)s^{c_1+\dots+c_{n-1}+p+1+c_{n+1}}} \ge 0.
$$
Here the last inequality is an equality whenever  $ c_{n+1}=1 $.  

\emph{Property 9} follows from Property 8. 

Prove that \emph{Property 10} is true. 
If $x_0 = \Delta^{'} _{c_1c_2...c_n...} \in \mathbb  S_{(s,0)}$, then 
$$
x_0  = \Delta^{'} _{c_1c_2...c_n...} \in \Delta^{'} _{c_1} \cap  \Delta^{'} _{c_1c_2} \cap \ldots \cap  \Delta^{'} _{c_1c_2...c_n}\cap \dots .
$$
Hence, 
$$
x_0  \in I_{c_1} \cap  I _{c_1c_2} \cap \ldots \cap  I _{c_1c_2...c_n}\cap \dots .
$$
From properties of cylinders it follows that 
$$
 I_{c_1} \supset  I _{c_1c_2} \supset \ldots \supset  I _{c_1c_2...c_n} \supset \dots .
$$
Therefore,
$$
  x_0=\bigcap_ {n=1}^{\infty} {\Delta' _{c_1c_2...c_n}}. 
$$
\end{proof}

\begin{corollary} The condition  
$$
\mathbb  S_{(s,0)} \subset I_0= \left[\frac{s-1}{s^{s-1}-1};\frac{1}{s-1}\right].
$$
is satisfied for any positive integer $s>2$.
\end{corollary}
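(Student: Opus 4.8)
The plan is to pin down the two endpoints of $I_0$ as the supremum of the ``highest'' rank-one cylinder and the infimum of the ``lowest'' one, and then simply read off their values from Property~\ref{1} of Lemma~\ref{lemma2}. First I would decompose the whole set into its rank-one cylinders,
$$
\mathbb S_{(s,0)}=\bigcup_{c_1=1}^{s-1}\Delta'_{c_1},
$$
which is immediate from the definition of $\mathbb S_{(s,0)}$, since every admissible sequence $(c_n)\in L$ has a first term $c_1\in A_0=\{1,\dots,s-1\}$. Applying Property~\ref{7} with empty prefix (the case $n-1=0$) yields
$$
\sup\Delta'_{[p+1]}<\inf\Delta'_{p},\qquad p\in\{1,\dots,s-2\},
$$
so the rank-one cylinders are linearly ordered, with $\Delta'_1$ lying entirely above $\Delta'_2$, which lies entirely above $\Delta'_3$, and so on down to the cylinder $\Delta'_{s-1}$ with base $c_1=s-1$. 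Because the supremum (resp.\ infimum) of a finite union is the largest (resp.\ smallest) of the suprema (resp.\ infima) of its pieces, this ordering gives
$$
\sup\mathbb S_{(s,0)}=\sup\Delta'_1,\qquad \inf\mathbb S_{(s,0)}=\inf\Delta'_{s-1}.
$$

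It then remains to substitute $n=1$ into the closed-form expressions of Property~\ref{1}. Taking $c_1=1$ gives $g_1=\tfrac{1}{s}$ and
$$
\sup\Delta'_1=\frac{1}{s}+\frac{1}{s(s-1)}=\frac{1}{s-1},
$$
a value actually attained by the element with constant digit string $(1)$, namely $\Delta^s_{111\dots}$. Taking $c_1=s-1$ gives $g_1=\tfrac{s-1}{s^{s-1}}$ and, after collecting terms,
$$
\inf\Delta'_{s-1}=\frac{s-1}{s^{s-1}}\left(1+\frac{1}{s^{s-1}-1}\right)=\frac{s-1}{s^{s-1}-1},
$$
attained by the element with constant string $(s-1)$. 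Since every point of $\mathbb S_{(s,0)}$ lies between its infimum and its supremum, we conclude
$$
\mathbb S_{(s,0)}\subset\left[\frac{s-1}{s^{s-1}-1};\frac{1}{s-1}\right]=I_0 .
$$

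I expect the only genuinely delicate point to be the passage from ``the $s-1$ rank-one cylinders are ordered'' to ``the global supremum and infimum of the set coincide with those of the two extreme cylinders.'' This is precisely where Property~\ref{7} does the essential work: without the strict separation of consecutive cylinders one could not exclude the possibility that some interior cylinder protrudes above $\Delta'_1$ or below $\Delta'_{s-1}$. Everything else is a short algebraic simplification of formulas already established in Lemma~\ref{lemma2}, so no further machinery is needed.
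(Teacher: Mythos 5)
Your proposal is correct, and it stays within the toolkit the paper itself provides: the paper states this corollary without proof, as an immediate consequence of Lemma~\ref{lemma2}, and your argument is a valid way to spell that out. The only difference is one of routing. The paper's implicit argument is the direct one: the endpoint formulas of Property~\ref{1} applied ``at rank zero'' (empty base, $g_0=0$), i.e., the observation that the extremal elements of $\mathbb S_{(s,0)}$ are the constant-digit strings, $\inf\mathbb S_{(s,0)}=\Delta'_{((s-1))}=\tfrac{s-1}{s^{s-1}-1}$ and $\sup\mathbb S_{(s,0)}=\Delta'_{(1)}=\tfrac{1}{s-1}$, exactly as in the geometric-series computation used to prove Property~\ref{1}. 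You instead decompose into rank-one cylinders, invoke Property~\ref{7} (with empty prefix) to order them, and then evaluate Property~\ref{1} at $n=1$ for the two extreme cylinders; that is sound, since the paper's proof of Property~\ref{7} goes through verbatim when the prefix sum vanishes. Note, though, that the ordering of \emph{all} consecutive cylinders is more than you need: since the supremum of a finite union is the maximum of the suprema (and dually for infima), it suffices to know that $c\mapsto\sup\Delta'_c$ and $c\mapsto\inf\Delta'_c$ are decreasing in $c$, which can be read off directly from the closed forms in Property~\ref{1} without Property~\ref{7}. So your ``delicate point'' is real but can be handled more cheaply; both routes land on the same endpoints, which, as you observe, are actually attained, so $I_0$ is in fact the smallest closed interval containing $\mathbb S_{(s,0)}$.
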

\begin{theorem}
The set $\mathbb  S_{(s,0)}$ is a perfect and nowhere dense set of zero Lebesgue measure.
\end{theorem}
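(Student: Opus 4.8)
The plan is to realize $\mathbb{S}_{(s,0)}$ as a Cantor-type set obtained as a nested intersection of finite unions of closed intervals, and then read off all three assertions from that structure together with the cylinder estimates of Lemma~\ref{lemma2}.

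First I would set up the approximating sets. For each $n$ put
$$
F_n = \bigcup_{(c_1,\dots,c_n)\in (A_0)^n} I_{c_1\dots c_n},
$$
where $I_{c_1\dots c_n}$ is the closed interval from Property~(\ref{2}). Each $F_n$ is a finite union of closed intervals, hence closed. Properties~(\ref{7})--(\ref{9}) show that for a fixed prefix the intervals $I_{c_1\dots c_{n-1}1},\dots,I_{c_1\dots c_{n-1}[s-1]}$ are pairwise disjoint and separated by the genuine gaps $T_{c_1\dots c_{n-1}p}$, while the two inequalities established under Property~(\ref{8}) give $I_{c_1\dots c_n i}\subset I_{c_1\dots c_n}$; consequently $F_{n+1}\subset F_n$. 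I then claim $\mathbb{S}_{(s,0)}=\bigcap_{n} F_n$. The inclusion $\subseteq$ is immediate from Properties~(\ref{2}) and~(\ref{10}). For $\supseteq$, a point $x\in\bigcap_n F_n$ selects, by the disjointness, a unique consistent address $(c_1,c_2,\dots)\in L$ with $x\in I_{c_1\dots c_n}$ for every $n$; since by Property~(\ref{5}) the diameters $d(I_{c_1\dots c_n})=Ks^{-(c_1+\dots+c_n)}\to 0$, the nested intersection is the single point $\Delta'_{c_1c_2\dots}\in\mathbb{S}_{(s,0)}$. This proves $\mathbb{S}_{(s,0)}$ is closed.

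Next, for perfectness I would show there are no isolated points. Given $x_0=\Delta'_{c_1c_2\dots}$, for each $n$ I change the digit $c_{n+1}$ to some $c'_{n+1}\in A_0$ with $c'_{n+1}\neq c_{n+1}$ (possible because $s-1\ge 2$), producing a point $y_n\in\mathbb{S}_{(s,0)}$, $y_n\neq x_0$, lying in the same cylinder $\Delta'_{c_1\dots c_n}$; then $|x_0-y_n|\le d(\Delta'_{c_1\dots c_n})\to 0$ by Property~(\ref{5}), so $x_0$ is a limit point. Combined with closedness and nonemptiness (the set is uncountable by the first lemma), this makes $\mathbb{S}_{(s,0)}$ perfect. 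For nowhere density, since the set is closed it suffices to show it has empty interior: given $x_0$ and $\varepsilon>0$, choose $n$ with $d(I_{c_1\dots c_n})<\varepsilon$, so that $I_{c_1\dots c_n}\subset(x_0-\varepsilon,x_0+\varepsilon)$; because $s>2$ this interval contains at least one gap $T_{c_1\dots c_n p}$, a nonempty open interval disjoint from $\mathbb{S}_{(s,0)}$ by Properties~(\ref{7})--(\ref{8}), so every neighbourhood of $x_0$ meets the complement and $x_0$ is not interior.

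Finally, for the Lebesgue measure I would estimate $\lambda(\mathbb{S}_{(s,0)})\le\lambda(F_n)$ for all $n$. Since same-rank intervals are disjoint, Property~(\ref{5}) gives
$$
\lambda(F_n)=\sum_{(c_1,\dots,c_n)\in(A_0)^n} d(\Delta'_{c_1\dots c_n}) = K\Bigl(\sum_{c=1}^{s-1}\frac{1}{s^{c}}\Bigr)^{\!n}=Kq^n,
$$
with $K=\frac{s^{s-1}-1-(s-1)^2}{(s-1)(s^{s-1}-1)}$ and $q=\sum_{c=1}^{s-1}s^{-c}=\frac{1-s^{1-s}}{s-1}<\frac1{s-1}\le\frac12$. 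As $q<1$, we get $\lambda(F_n)=Kq^n\to 0$, whence $\lambda(\mathbb{S}_{(s,0)})=0$. The main obstacle is the bookkeeping of the first step: proving the reverse inclusion $\bigcap_n F_n\subseteq\mathbb{S}_{(s,0)}$ and checking that the nesting $F_{n+1}\subset F_n$ really follows from the two displayed inequalities of Property~(\ref{8}). Once the geometric tree structure is pinned down, perfectness, nowhere density, and the measure bound all follow mechanically from the diameter formula and the gap estimates.
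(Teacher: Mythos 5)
Your proposal is correct and follows essentially the same route as the paper: the paper likewise realizes $\mathbb{S}_{(s,0)}=\bigcap_{k=1}^{\infty}E_k$ as a nested intersection of finite unions of closed intervals $I_{c_1\ldots c_k}$, derives nowhere density from the gap properties (Properties 7--8), perfectness from closedness plus approximation by points of the same rank-$n$ interval, and zero measure from the geometric factor $\sigma=\sum_{c=1}^{s-1}s^{-c}<1$ (the paper sums the measures of the removed parts $\bar E_k$ to the full length $d_0$, which is equivalent bookkeeping to your direct estimate $\lambda(F_n)=Kq^n\to 0$). Your version is in fact slightly more careful than the paper's, which asserts the identity $\mathbb{S}_{(s,0)}=\bigcap_k E_k$ without verifying the reverse inclusion that you prove via disjointness of same-rank intervals and shrinking diameters.
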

\begin{proof}
  Let us prove that \emph{the set $\mathbb  S_{(s,0)}$ is a  nowhere dense set}.  From the definition it follows that there exist cylinders $ \Delta^{'} _{c_1...c_n}$ of rank $n$ in an arbitrary subinterval of the segment    $ I_0 $. Since Property 7 and Property 8 hold for these cylinders, we have that for any subinterval of  $ I_0 $ there exists a subinterval such that does not contain points from $\mathbb  S_{(s,0)}$. So $\mathbb  S_{(s,0)}$ is a  nowhere dense set.
 
Show that \emph{$\mathbb  S_{(s,0)}$ is a set of zero Lebesgue measure}. Suppose that 
$$
 \mathbb  S_{(s,0)}= \bigcap^{\infty} _{k=1} E_k,
$$
where
$$
E_1= I_1 \cup I_2 \cup\ldots \cup I_{s-1},
$$
$$
E_2= I_{11} \cup \ldots\cup I_{[s-1][s-1]},
$$
$$
E_3= I_{111} \cup \ldots \cup I_{[s-1][s-1][s-1]},
$$
$$
......................................................
$$
$$
E_k= I _{\underbrace{1...1}_{k}} \cup \ldots\cup I _{\underbrace{[s-1]...[s-1]}_{k}},
$$
$$
......................................................
$$
and $ I_{c_1c_2...c_n} $ is a closed interval whose endpoints coincide with endpoits of the cylinder $ \Delta^{'} _{c_1c_2...c_n} $.
In addition, since $ E_{k+1} \subset E_k $, we have 
$$
E_k= E_{k+1} \cup \bar E_{k+1}.
$$

Let $  I_0 $ be an initial closed interval such that $ \lambda(I_0)=d_0 $ and $\mathbb  S_{(s,0)}\subset I_0$. Then
$$
\lambda(E_1)=\left(\frac{1}{s}+\frac{1}{s^2}+\frac{1}{s^3}+\dots+\frac{1}{s^{s-1}}\right)d_0.
$$
If
$$
\frac{1}{s}+\frac{1}{s^2}+\frac{1}{s^3}+\dots+\frac{1}{s^{s-1}}=\sigma, 
$$
then
$$
\lambda(\bar E_1)=d_0 - \sigma d_0= d_0(1 - \sigma).
$$

Similarly,
$$
\lambda(\bar E_2)=d^{'(1)} _{i} - \sigma d^{'(1)} _{i}=d^{'(1)} _{i}(1 - \sigma)=\left(\frac{1}{s}+\frac{1}{s^2}+\dots+\frac{1}{^{s-1}}\right)d_0(1 - \sigma)=
$$
$$
= \sigma d_0 (1 - \sigma), ~\mbox{where}~ i=\overline{1,s-1}, 
$$
and 
$$
\lambda(\bar E_3)=d^{'(2)} _{i} - \sigma d^{'(2)} _{i}=d^{'(2)} _{i}(1 - \sigma)=\sigma d^{'(1)} _{i}(1 - \sigma)=\sigma^2 d_0(1 - \sigma).
$$
So we obtain that the sequence
$$
d_0(1 - \sigma),\sigma d_0 (1 - \sigma),\sigma^2 d_0(1 - \sigma),\dots ,\sigma^{n-1} d_0(1 - \sigma), \dots ,
$$
is an infinitely decreasing geometric progression. Hence, 
$$
\lambda(\mathbb  S_{(s,0)})= d_0 - \sum^{\infty} _{k=1} \lambda(\bar E_k)=d_0 - \sum^{\infty} _{j=1} {\sigma^{j-1}d_0(1 - \sigma)}=d_0 - \frac{d_0(1 - \sigma)}{1-\sigma}=0.
$$
So $\mathbb  S_{(s,0)}$  is a set of zero Lebesgue measure. 

Prove that \emph{$\mathbb  S_{(s,0)}$  is a perfect set}. Since $ E_k= I _{\underbrace{1...1}_{k}} \cup \ldots\cup I _{\underbrace{[s-1]...[s-1]}_{k}} $ is a closed sets ($E_k$ is a union of segments), we see that 
$$
 \mathbb  S_{(s,0)}= \bigcap^{\infty} _{k=1} E_k
$$
is a closed set. 

Let $ x \in \mathbb  S_{(s,0)} $,    $ P $  be any interval that contains $ x $, and $ J_n $ be a segment of  $ E_n $ that contains $ x $. Choose a number $ n $ such that $  J_n \subset P $. Suppose that $ x_n $ is the endpoint of $ J_n $ such that the condition 
$ x_n \ne x $ holds. Hence $ x_n \in \mathbb  S_{(s,0)} $ and $  x $ is a limit point of the set. 

Since $\mathbb  S_{(s,0)}$ is a closed set and does not contain isolated points, we obtain that $\mathbb  S_{(s,0)}$ is a perfect set. 
\end{proof}

\section{Topological and metric properties of $\mathbb  S_{(s,u)}$, where $u\in A_0$}

By $x_0=\Delta^{(s,u)} _{c_1...c_n...}$ denote the  equality 
$$
x_0=\frac{u}{s-1} +\sum^{\infty} _{k=1}{\frac{c_k - u}{s^{c_1+\dots+c_k}}}.
$$
That is
$$
 x_0 =\Delta^{(s,u)} _{c_1...c_n...} = \Delta^ {s}_{\underbrace{u...u}_{c_1 - 1}c_1 \underbrace{u...u}_{c_2 - 1}c_2...\underbrace{u...u}_{c_n - 1}c_n...}.
$$

\begin{lemma}
The set $\mathbb S_{(s,u)} $ is an uncountable set for an arbitrary $u \in A_0$.
\end{lemma}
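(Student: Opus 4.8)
The plan is to repeat, for a general $u$, the bijection argument used to show that $\mathbb S_{(s,0)}$ is uncountable. The defining conditions $\alpha_n\in A_0$ and $\alpha_n\ne u$ mean precisely that each digit $\alpha_n$ of an element of $\mathbb S_{(s,u)}$ ranges over the reduced alphabet $B:=A_0\setminus\{u\}=\{1,\dots,s-1\}\setminus\{u\}$, a set of $s-2$ symbols. Thus, exactly as in the case $u=0$, an element of $\mathbb S_{(s,u)}$ should be in one-to-one correspondence with a sequence $(c_n)\in B^{\infty}$, and I would build an explicit bijection onto a Cantor-type set over $B$.

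First I would verify that each $x\in\mathbb S_{(s,u)}$ has a \emph{unique} representation $x=\Delta^{(s,u)} _{c_1 c_2\dots}$ with $(c_n)\in B^{\infty}$, the analogue of the remark that no s-adic-rational lies in $\mathbb S_{(s,0)}$. Concretely, the pattern $\underbrace{u\dots u}_{c_1-1}c_1\underbrace{u\dots u}_{c_2-1}c_2\dots$ has infinitely many nonzero digits $c_n$, so it is not eventually $(0)$, and it is not eventually $(s-1)$ either (a short case check on whether $u=s-1$); hence $x$ is s-adic-irrational and the sequence $(c_n)$ is uniquely determined. I would then define
\[
f:\mathbb S_{(s,u)}\to C[s,B],\qquad C[s,B]:=\left\{y:y=\Delta^s _{\beta_1\beta_2\dots},\ \beta_n\in B\right\},
\]
by $f\!\left(\Delta^{(s,u)} _{c_1 c_2\dots}\right)=\Delta^s _{c_1 c_2\dots}$, that is, by discarding the interposed blocks of $u$'s and reading $(c_n)$ as an ordinary s-adic expansion. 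Surjectivity is immediate from the construction and injectivity follows from the uniqueness just noted, so $f$ is a bijection; since $C[s,B]$ is a self-similar Cantor set of the cardinality of the continuum whenever $|B|\ge 2$, transporting this cardinality through $f$ shows $\mathbb S_{(s,u)}$ is uncountable.

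The step I expect to be the main obstacle is ensuring that the reduced alphabet is large enough: the argument yields uncountability only when $|B|=s-2\ge 2$, i.e. $s\ge 4$, whereas for $s=3$ and $u\in A_0$ the alphabet $B$ is a singleton and $\mathbb S_{(s,u)}$ collapses to a single point, so that degenerate case must be excluded or treated apart. A secondary point requiring care is the uniqueness claim above, which for $u=0$ was free (s-adic-rationals are excluded) but for a general $u\in A_0$ must be re-established by checking, via the separation of cylinders as in Lemma~\ref{lemma2}, that distinct sequences $(c_n)$ produce distinct points.
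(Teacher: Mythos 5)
Your proof takes essentially the same route as the paper: deleting the interposed blocks of $u$'s gives a map onto $C[s,A_0\setminus\{u\}]$, and injectivity is secured exactly as you indicate (the paper rules out the case of a double s-adic representation of $f(x_1)=f(x_2)$ by noting that the digits $\alpha_n$ are never $0$). Your caveat about $s=3$ is not a defect of your argument but of the lemma itself, and it is a point the paper misses: for $s=3$ and $u\in A_0$ the reduced alphabet $A_0\setminus\{u\}$ is a singleton, so $C[3,A_0\setminus\{u\}]$ and hence $\mathbb S_{(3,u)}$ consist of a single point, and the paper's assertion that $C[s,A_0\setminus\{u\}]$ is uncountable (and the lemma as stated) fails in that degenerate case, which must be excluded as you propose.
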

\begin{proof}  Define the map $f: \mathbb S_{(s,u)} \to C[s,A_0 \setminus \{u\}] $ by the rule 
$$
\forall (\alpha_n)\in L:x=\left(\frac{u}{s-1}+\sum^{\infty} _{n=1} {\frac{\alpha_n - u}{s^{\alpha_1+ \alpha_2+\dots+\alpha_n}}} \right) \stackrel{f}{\rightarrow} \sum^{\infty} _{n=1} {\frac{\alpha_n}{s^n}}=y=f(x),
$$
i.e., 
$$
x= \Delta^ {s}_{\underbrace{u...u}_{\alpha_1 - 1}\alpha_1 \underbrace{u...u}_{\alpha_2 - 1}\alpha_2...\underbrace{u...u}_{\alpha_n - 1}\alpha_n...}\stackrel{f}{\rightarrow} \Delta^{s} _{\alpha_1\alpha_2...\alpha_n...}=y=f(x).
$$

Assume that there exist $x_1 \ne x_2$ from the set $\mathbb S_{(s,u)} $ such that $f(x_1)=f(x_2)$.  Suppose that 
$x_1=\Delta^ {s}_{\underbrace{u...u}_{\alpha_1 - 1}\alpha_1 \underbrace{u...u}_{\alpha_2 - 1}\alpha_2...\underbrace{u...u}_{\alpha_n - 1}\alpha_n...}$, $x_2=\Delta^ {s}_{\underbrace{u...u}_{\beta_1 - 1}\beta_1 \underbrace{u...u}_{\beta_2 - 1}\beta_2...\underbrace{u...u}_{\beta_n - 1}\beta_n...}$.

If the number $y_{1,2}=f(x_1)=f(x_2)$ has the unique s-adic representation, then $\alpha_n=\beta_n$ for all $n \in \mathbb N$. Hence $x_1=x_2$. The last-mentioned equality contradicts the assumption. 

If $f(x_1)=f(x_2)$  has two different s-adic representations, then there exists $n_0$ such that 
$$
\alpha_{n_0+1}=\alpha_{n_0+2}=\dots=0 \mbox{ or}~ \beta_{n_0+1}=\beta_{n_0+2}=\dots=0.
$$

Since $\alpha_n \ne 0$ and  $\beta_n \ne0$  for any positive integer $n$, we obtain that $f$ is a bijection. Hence  $\mathbb S_{(s,u)} $ is
uncountable, since $ C[s,A_0 \setminus \{u\}] $ is uncountable.
 \end{proof}

\begin{definition} \emph{A cylinder   $ \Delta^{(s,u)} _{c_1...c_n}$ of rank $n$ with base $c_1c_2\ldots c_n$} is a set of the following
form
$$
\Delta^{(s,u)} _{c_1\ldots c_n}=\left\{x: x=\left(\sum^{n} _{k=1} {\frac{c_k -u}{s^{c_1+\dots+c_k}}}\right)+\frac{1}{s^{c_1+\dots+c_n}}{\left( \sum^{\infty} _{i=n+1} {\frac{\alpha_i - u}{s^{\alpha_{n+1}+\dots+\alpha_i}}}\right)}+\frac{u}{s-1}  \right\},
$$
 where $ c_1, c_2,\dots ,c_n $ are fixed s-adic digits,  $\alpha_n \ne u,\alpha_n \ne 0$, and $ 2<s \in\mathbb N, n \in\mathbb N $.
\end{definition}

\begin{lemma} Cylinders $ \Delta^{(s,u)} _{c_1...c_n...} $ have the following properties:
\begin{enumerate}
\item
$$
\inf  \Delta^{(s,u)} _{c_1...c_n...}=\begin{cases}
\tau +\frac{1}{s^{c_1+...+c_n}}\left(\frac{s-1-u}{s^{s-1}-1}+\frac{u}{s-1}\right),&\text{if $u \in \{0,1\}$}\\
$$\\
\tau +\frac{1}{s^{c_1+\dots+c_n}}\frac{1}{s-1},&\text{if $ u \in \{2,3,\dots ,s-1\}$,}
\end{cases}
$$
$$
\sup  \Delta^{(s,u)} _{c_1...c_n...}=\begin{cases}
\tau + \frac{1}{s^{c_1+\dots+c_n}}\frac{1}{s-1},&\text{if $ u=0 $}
$$\\
$$\\
\tau +\frac{1}{s^{c_1+\dots+c_n}}\left(\frac{1}{s^{u+1}-1}+\frac{u}{s-1}\right),&\text{if $ u \in \{1,2,\dots ,s-2\} $}\\
$$\\
\tau +\frac{1}{s^{c_1+\dots+c_n}}\left(1-\frac{1}{s^{s-2}-1}\right),&\text{if $u=s-1$,}\\
\end{cases}
$$
where
$$
\tau= \sum^{n} _{k=1} {\frac{c_k-u}{s^{c_1+\dots+c_k}}}+\sum^{n} _{k=1} {\frac{u}{s^k}}.
$$
\item If $d(\cdot) $ is the diameter of a set, then
$$
d(\Delta^{(s,u)} _{c_1...c_n})=\frac{1}{s^{c_1+\dots+c_n}}d(\mathbb S_{(s,u)});
$$
\item
$$
\frac{d(\Delta^{(s,u)} _{c_1...c_nc_{n+1}})}{d(\Delta^{(s,u)} _{c_1...c_n})}=\frac{1}{s^{c_{n+1}}};
$$
\item 
$$
  \Delta^{(s,u)} _{c_1c_2...c_n} =\bigcup^{s-1} _{i=1} { \Delta^{(s,u)} _{c_1c_2...c_ni}}~~~\forall c_n \in A_0,~~~n \in \mathbb N,~ i \ne u.
$$
\item The following relationships are satisfied: 
\begin{enumerate}
\item if $ u\in \{0,1\}$, then 
$$
\inf \Delta^{(s,u)} _{c_1...c_np}> \sup \Delta^{(s,u)} _{c_1...c_n[p+1]};
$$
\item if  $ u \in \{2,3,\dots ,s-3\}$, then 
$$
\begin{cases}
\sup \Delta^{(s,u)} _{c_1...c_np}< \inf \Delta^{(s,u)} _{c_1...c_n[p+1]}&\text{for all $p+1\le u$}\\
$$\\
\inf \Delta^{(s,u)} _{c_1...c_np}> \sup \Delta^{(s,u)} _{c_1...c_n[p+1]},&\text{for all $u<p$;}
\end{cases}
$$
\item if $ u  \in \{s-2,s-1\}$, then
$$
\sup \Delta^{(s,u)} _{c_1...c_np}< \inf \Delta^{(s,u)} _{c_1...c_n[p+1]}.
$$
\end{enumerate}
\end{enumerate}
\end{lemma}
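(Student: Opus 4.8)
The plan is to reduce every assertion to the behaviour of the ``master tail set''
$$
S_u=\left\{\sum_{i=1}^{\infty}\frac{\beta_i-u}{s^{\beta_1+\dots+\beta_i}}:\ \beta_i\in A_0\setminus\{u\}\right\},
$$
so that $\mathbb S_{(s,u)}=\frac{u}{s-1}+S_u$. First I would note that every $x\in\Delta^{(s,u)}_{c_1\dots c_n}$ splits as
$$
x=\frac{u}{s-1}+\sum_{k=1}^{n}\frac{c_k-u}{s^{c_1+\dots+c_k}}+\frac{1}{s^{c_1+\dots+c_n}}\,T,\qquad T\in S_u,
$$
and that, as $\alpha_{n+1},\alpha_{n+2},\dots$ range freely over $A_0\setminus\{u\}$, the tail $T$ ranges over \emph{all} of $S_u$ independently of the base $c_1\dots c_n$. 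Hence
$$
\inf\Delta^{(s,u)}_{c_1\dots c_n}=(\text{fixed base})+\frac{\inf S_u}{s^{c_1+\dots+c_n}},\qquad
\sup\Delta^{(s,u)}_{c_1\dots c_n}=(\text{fixed base})+\frac{\sup S_u}{s^{c_1+\dots+c_n}}.
$$
This yields Property~1 once $\inf S_u,\sup S_u$ are known, and gives Properties~2 and~3 because $d(\mathbb S_{(s,u)})=\sup S_u-\inf S_u$ while the base cancels in the difference and in the ratio. Property~4 is just the observation that fixing $\alpha_{n+1}=i$ for each admissible $i$ partitions the cylinder, which is immediate from the definition.

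The core is the evaluation of $\sup S_u$ and $\inf S_u$, for which I would exploit the self-similar decomposition
$$
S_u=\bigcup_{c\in A_0\setminus\{u\}}\left(\frac{c-u}{s^{c}}+\frac{1}{s^{c}}\,S_u\right)
$$
obtained by isolating the first digit. Writing $M=\sup S_u$ and taking suprema (legitimate since each factor $s^{-c}$ is positive) gives the fixed-point relation $M=\max_{c}\frac{c-u+M}{s^{c}}$; if the maximum is attained at $c^{\ast}$ then $M=\frac{c^{\ast}-u}{s^{c^{\ast}}-1}$, i.e.\ the extremal tail is the \emph{constant} sequence $(c^{\ast})$. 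A short consistency check then shows $\sup S_u=\max_{c\in A_0\setminus\{u\}}\frac{c-u}{s^{c}-1}$ and symmetrically $\inf S_u=\min_{c\in A_0\setminus\{u\}}\frac{c-u}{s^{c}-1}$. It remains to locate the optimizing digit, and this is exactly where the case split on $u$ enters: for $c>u$ the quotient $\frac{c-u}{s^{c}-1}$ is strictly decreasing, so the supremum is attained at the smallest admissible such $c$ ($c=1$ when $u=0$, $c=u+1$ when $1\le u\le s-2$), while for $u=s-1$ no such $c$ exists and the maximum over the negative values is at $c=s-2$; the minimizing digit is found the same way ($c=s-1$ for $u\in\{0,1\}$, $c=1$ for $u\ge2$). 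Substituting these constants into the geometric sums reproduces the four sup-branches and the two inf-branches of Property~1.

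For Property~5 I would feed the explicit extremes into the comparison of the two rank-$(n+1)$ sub-cylinders obtained by appending $p$ and $p+1$. After the common base is subtracted, the sign of $\inf\Delta^{(s,u)}_{c_1\dots c_np}-\sup\Delta^{(s,u)}_{c_1\dots c_n[p+1]}$ is governed by the difference of the leading terms $\psi(c)=\frac{c-u}{s^{c}}$, and a one-line computation gives
$$
\psi(c+1)-\psi(c)=\frac{(s-1)(u-c)+1}{s^{c+1}},
$$
so $\psi$ increases for $c\le u$ and decreases for $c\ge u$, the turning point being exactly $c=u$. Thus when both appended digits lie below $u$ (the regime $p+1\le u$) the $[p+1]$-cylinder sits above the $p$-cylinder, giving $\sup\Delta^{(s,u)}_{c_1\dots c_np}<\inf\Delta^{(s,u)}_{c_1\dots c_n[p+1]}$, whereas when both lie above $u$ (the regime $u<p$) the inequality reverses; this is precisely the trichotomy, the extreme cases $u\in\{0,1\}$ and $u\in\{s-2,s-1\}$ being the situations in which only one of the two regimes can occur for admissible digits.

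The hard part will be the passage from the fixed-point relation to the assertion that the extremal tails are genuinely the constant sequences, carried out without circular reasoning: one must verify that $M=\sup S_u$ satisfies $\frac{c-u+M}{s^{c}}\le M$ for every admissible $c$ with equality at the optimizing digit, and likewise for the infimum. The remaining work is bookkeeping: tracking which digits are excluded because they equal $u$ (this is what forces the separate treatment of $u=0$, $u=s-1$ and the endpoints $p+1=u$, $p=u$ in Property~5) and confirming the monotonicity of $\frac{c-u}{s^{c}-1}$ in $c$, both routine once the reduction above is in place.
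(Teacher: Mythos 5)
Your proposal is correct in substance and takes a genuinely different route from the paper. The paper simply \emph{asserts} the extremizing digit sequences (all digits $s-1$, all digits $1$, or all digits $u+1$, according to the case) with the justification ``these equalities follow from the definition,'' and then proves Property~5 by brute-force case-by-case computation of the endpoint differences (doing only representative cases, the rest ``by analogy''). You instead derive the extremes systematically: the self-similar decomposition $S_u=\bigcup_{c}\bigl(\frac{c-u}{s^c}+s^{-c}S_u\bigr)$ forces the fixed-point relation $M=\max_c\frac{c-u+M}{s^c}$, hence $\sup S_u=\max_{c\neq u}\frac{c-u}{s^c-1}$ (and dually for the infimum), so the extremal tails are constant sequences \emph{as a theorem rather than an assumption}; monotonicity of $\frac{c-u}{s^c-1}$ then locates the optimal digit, and all branches of Property~1 and the Corollary drop out, matching the paper's values in every case. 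This buys two things the paper lacks: a non-circular justification of the extremal sequences, and a conceptual explanation (via the unimodality of $\psi(c)=\frac{c-u}{s^c}$, which switches from increasing to decreasing at the excluded digit $u$) of \emph{why} the ordering in Property~5 reverses precisely at $u$. As a side benefit, your form of the endpoint formula, base $+\frac{u}{s-1}+s^{-(c_1+\cdots+c_n)}\inf S_u$, is the correct one: the paper's $\tau$ as printed sums $\frac{u}{s^k}$ only up to $k=n$, where it should run up to $k=c_1+\cdots+c_n$; the two agree only when all $c_k=1$.

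One caveat, which is the only soft spot: your claim that after subtracting the common base the sign in Property~5 ``is governed by the difference of the leading terms $\psi(c)$'' is too strong if read literally. The actual difference is $s^{-N}\bigl[\psi(p)-\psi(p+1)+s^{-p}\inf S_u-s^{-p-1}\sup S_u\bigr]$ (or with $\inf$/$\sup$ swapped), and the tail terms enter at the same order $s^{-p-1}$ as the leading difference and can oppose it: for $u=0$ and $s\geq 4$ one has $s\inf S_0-\sup S_0=\frac{s(s-1)}{s^{s-1}-1}-\frac{1}{s-1}<0$, so the gap is not positive ``because $\psi$ decreases'' but only because $\psi(p)-\psi(p+1)$ dominates that negative correction. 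The unimodality of $\psi$ thus predicts the \emph{direction} of each inequality but does not by itself give the strict separation; the explicit inequalities with $\inf S_u$ and $\sup S_u$ substituted in — essentially the computations the paper performs — still have to be carried out, and they are the real content of Property~5 rather than dismissible bookkeeping. Since you do say you would feed the explicit extremes into the comparison, this is a matter of emphasis rather than a fatal gap, but the write-up should make clear that the tail contributions are estimated, not ignored.
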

\begin{proof} \emph{The first property} follows from the equalities: 
$$
\inf \mathbb S_{s,u}=\begin{cases}
\sum^{\infty} _{k=1} {\frac{\max \{\alpha_{k}\}-u}{s^{\max\{\alpha_1\}+\dots+\max \{\alpha_k\}}}+\frac{u}{s-1}},&\text{if $u \in \{0,1\}$}\\
$$\\
\sum^{\infty} _{k=1} {\frac{\min \{\alpha_{k}\}-u}{s^{\min\{\alpha_1\}+\dots+\min \{\alpha_k\}}}+\frac{u}{s-1}},&\text{if $ u \in \{2,3,\dots,s-1\}$,}
\end{cases}
$$
$$
\sup \mathbb S_{(s,u)}=\begin{cases}
\sum^{\infty} _{k=1} {\frac{\min \{\alpha_{k}\}}{s^{\min\{\alpha_1\}+\dots+\min \{\alpha_k\}}}},&\text{if $ u=0 $}
$$\\
$$\\
\sum^{\infty} _{k=1}{\frac{1}{s^{(1+u)k}}}+\frac{u}{s-1},&\text{if $ u \in \{1,2,\dots,s-2\} $}\\
$$\\
\sum^{\infty} _{k=1} {\frac{\max \{\alpha_{k}\}-s+1}{s^{\max\{\alpha_1\}+\dots+\max \{\alpha_k\}}}+1},&\text{if $u=s-1$.}\\
\end{cases}
$$
These equalities follow from the definition of  $ \mathbb S_{s,u}$. 

It is easy to see that \emph{the second property} follows from the first property, \emph{the third property} is a corollary of the second property, and  \emph{Property 4} follows from the definition. 

Let us show that \emph{Property 5} is true. We now prove that the first inequality holds for  $ u=1$. In fact,
$$
\inf \Delta^{(s,1)} _{c_1...c_np}- \sup \Delta^{(s,1)} _{c_1...c_n[p+1]}=\left(\sum^{n} _{k=1} {\frac{c_k-1}{s^{c_1+\dots+c_k}}}\right)+\left(\sum^{c_1+\dots+c_n} _{k=1} {\frac{1}{s^k}}\right)+
$$
$$
+\frac{1}{s^{c_1+\dots+c_n+1}}+\frac{1}{s^{c_1+\dots+c_n+2}}+\dots+\frac{1}{s^{c_1+\dots+c_n+p-1}}+\frac{p}{s^{c_1+\dots+c_n+p}}+\frac{\inf \mathbb S_{(s,1)}}{s^{c_1+\dots+c_n+p}}-
$$
$$
-\left(\sum^{n} _{k=1} {\frac{c_k-1}{s^{c_1+\dots+c_k}}}\right)-\left(\sum^{c_1+\dots+c_n} _{k=1} {\frac{1}{s^k}}\right)-\frac{1}{s^{c_1+\dots+c_n+1}}-\frac{1}{s^{c_1+\dots+c_n+2}}-\dots-
$$
$$
-\frac{1}{s^{c_1+\dots+c_n+p-1}}-\frac{1}{s^{c_1+\dots+c_n+p}}-\frac{p+1}{s^{c_1+\dots+c_n+p+1}}-\frac{\sup \mathbb S_{(s,1)}}{s^{c_1+\dots+c_n+p+1}}=
$$
$$
=\frac{p}{s^{c_1+\dots+c_n+p}}+\frac{1}{s^{c_1+\dots+c_n+p}}\left(\frac{s-2}{s^{s-1}-1}+\frac{1}{s-1}\right)-\frac{1}{s^{c_1+\dots+c_n+p}}-
$$
$$
-\frac{p+1}{s^{c_1+\dots+c_n+p+1}}-\frac{1}{s^{c_1+\dots+c_n+p+1}}\left(\frac{1}{s^2-1}+\frac{1}{s-1}\right)=
$$
$$
=\frac{1}{s^{c_1+\dots+c_n+p}}\left(\left(p-1-\frac{p+1}{s}\right)+\left(\frac{(s^{s-1}-1)((s+1)s-s-2)+(s-2)(s^2-1)s}{(s-1)s(s+1)(s^{s-1}-1)}\right) \right)>0,
$$
where $1< p<s-1$ and $ s>2 $.

Consider the system of inequalities. Let us prove that the first inequality of the system is true. Here  $ p+1\le u $. In fact,
$$
\sup \Delta^{(s,u)} _{c_1...c_np}- \inf \Delta^{(s,u)} _{c_1...c_n[p+1]}=\frac{p}{s^{c_1+\dots+c_n+p}}+\frac{1}{s^{c_1+\dots+c_n+p}}\left(\frac{1}{s^{u+1}-1}+\frac{u}{s-1}\right)-
$$
$$
-\frac{u}{s^{c_1+\dots+c_n+p}}-\frac{p+1}{s^{c_1+\dots+c_n+p+1}}-\frac{1}{(s-1)s^{c_1+\dots+c_n+p+1}}=
$$
$$
=\frac{1}{s^{c_1+\dots+c_n+p}}\left(\frac{1}{s^{u+1}-1}+\frac{u}{s-1}-u-\frac{p+1}{s}-\frac{1}{s(s-1)}+p\right)<
$$
$$
<\frac{1}{s^{c_1+\dots+c_n+p}}\left(\frac{1}{s^{u+1}-1}-\frac{1}{s(s-1)}+\frac{u}{s-1}-\frac{p+1}{s}-1\right)<0.
$$

Prove that the second inequality is true. Here $ p>u $, i.e.,  $p-u \ge 1$. Similarly,
$$
\sup \Delta^{(s,u)} _{c_1...c_n[p+1]}-\inf \Delta^{(s,u)} _{c_1...c_np}=\frac{p}{s^{c_1+\dots+c_n+p}}+\frac{1}{s^{c_1+\dots+c_n+p}}\frac{1}{s-1} -\frac{u}{s^{c_1+\dots+c_n+p}}-
$$
$$
-\frac{p+1}{s^{c_1+\dots+c_n+p+1}}-\frac{1}{s^{c_1+\dots+c_n+p+1}}\left(\frac{1}{s^{u+1}-1}+\frac{u}{s-1}\right)=
$$
$$
=\frac{1}{s^{c_1+\dots+c_n+p}}\left(p+\frac{1}{s-1}-u-\frac{p+1}{s}-\frac{1}{s(s^{u+1}-1)}-\frac{u}{s(s-1)}\right)\ge
$$
$$
\ge \frac{1}{s^{c_1+\dots+c_n+p}}\left(1+\frac{1}{s-1}-\frac{p+1}{s}-\frac{1}{s(s^{u+1}-1)}-\frac{u}{s(s-1)}\right)>0.
$$

The last inequality of Property 5  is a corollary of the system of inequalities and is proved by analogy. 
\end{proof}
\begin{corollary} The following equalities are hold: 
$$
\inf \mathbb S_{s,u}=\begin{cases}
\sum^{\infty} _{k=1} {\frac{s-1-u}{{s^{(s-1)k}}}+\frac{u}{s-1}}=\frac{s-u-1}{s^{s-1}-1}+\frac{u}{s-1},&\text{if $u \in \{0,1\}$}\\
$$\\
\sum^{\infty} _{k=1} {\frac{1-u}{{s^k}}+\frac{u}{s-1}}=\frac{1}{s-1},&\text{if $ u \in \{2,3,\dots,s-1\}$,}
\end{cases}
$$
$$
\sup \mathbb S_{(s,u)}=\begin{cases}
\frac{1}{s-1},&\text{if $ u=0 $}
$$\\
$$\\
\sum^{\infty} _{k=1}{\frac{1} {s^{(u+1)k}}}+\frac{u}{s-1}=\frac{1}{s^{u+1}-1}+\frac{u}{s-1},&\text{if $ u \in \{1,2,\dots,s-2\} $}\\
$$\\
1-\frac{1}{s^{s-2}-1},&\text{if $u=s-1$.}\\
\end{cases}
$$
\end{corollary}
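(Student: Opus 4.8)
The plan is to treat this corollary as a purely computational consequence of the series expressions for $\inf \mathbb{S}_{(s,u)}$ and $\sup \mathbb{S}_{(s,u)}$ already recorded in the proof of the preceding lemma (its first property). In each regime of $u$ those expressions reduce, once the extremal admissible digit has been inserted for every $\alpha_k$, to a single constant-digit series, which is geometric and which I then sum in closed form. Thus the only genuine work is (i) identifying in each case which digit of the admissible alphabet $A_0\setminus\{u\}=\{1,\dots,s-1\}\setminus\{u\}$ is extremal, and (ii) evaluating $\sum_{k=1}^{\infty} q^{k}=\frac{q}{1-q}$ for the relevant ratio $q$.

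First I would pin down the extremal digits. The admissible alphabet is $\{1,2,\dots,s-1\}\setminus\{u\}$, so its largest element is $s-1$ when $u\ne s-1$ and $s-2$ when $u=s-1$, while its smallest element is $1$ when $u\ne 1$ and $2$ when $u=1$. Substituting a constant value $c$ for every $\alpha_k$ turns the exponent $\max\{\alpha_1\}+\dots+\max\{\alpha_k\}$ (respectively its $\min$ version) into $ck$, so the series collapses to $\sum_{k=1}^{\infty}\frac{c-u}{s^{ck}}=\frac{c-u}{s^{c}-1}$, to which the constant shift $\frac{u}{s-1}$ is then added.

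Then I would run through the cases. For $\inf\mathbb{S}_{(s,u)}$ with $u\in\{0,1\}$ the relevant digit is $c=s-1$, giving $\frac{s-1-u}{s^{s-1}-1}+\frac{u}{s-1}$; for $u\in\{2,\dots,s-1\}$ it is $c=1$, giving $\frac{1-u}{s-1}+\frac{u}{s-1}=\frac{1}{s-1}$. For $\sup\mathbb{S}_{(s,u)}$ with $u=0$ the digit is $c=1$, yielding $\frac{1}{s-1}$; for $u\in\{1,\dots,s-2\}$ the maximizing digit is $c=u+1$, so the numerator $c-u$ equals $1$ and $\sum_{k=1}^{\infty}s^{-(u+1)k}=\frac{1}{s^{u+1}-1}$ produces $\frac{1}{s^{u+1}-1}+\frac{u}{s-1}$; and for $u=s-1$ the digit is $c=s-2$, so the numerator $c-u=-1$ and $1-\frac{1}{s^{s-2}-1}$ results (here $\frac{u}{s-1}=1$).

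I do not expect any real obstacle: each value is obtained by summing one geometric progression, and the regimes of $u$ merely dictate which of max/min and which shift term to use. The one point worth recording is that every extremal constant sequence is actually admissible — the digits $s-1,\,s-2,\,1,\,2,\,u+1$ all lie in $\{1,\dots,s-1\}$ and differ from $u$ in the corresponding range, using $s>2$ — so these infima and suprema are in fact attained and the stated equalities hold exactly.
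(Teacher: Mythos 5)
Your proposal is correct and follows essentially the same route as the paper: there the Corollary is exactly the evaluation of the extremal-digit series recorded in the proof of the first property of the preceding lemma, obtained by substituting the appropriate constant admissible digit and summing the resulting geometric progression $\sum_{k=1}^{\infty} s^{-ck}=\frac{1}{s^{c}-1}$. Your case analysis and closed forms all agree with the paper's formulas, including the non-obvious extremal choices $c=s-1$ for the infimum when $u\in\{0,1\}$ and $c=u+1$ for the supremum when $u\in\{1,\dots,s-2\}$.
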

\begin{theorem}
The set $\mathbb S_{(s,u)}$ is a perfect and nowhere dense set of zero Lebesgue measure.
\end{theorem}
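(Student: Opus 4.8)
The plan is to reproduce the three–part structure of the proof of the preceding theorem (the case $u=0$), now driving everything through the cylinder lemma for $\Delta^{(s,u)}_{c_1\dots c_n}$ just established. First I would record the analogue of Property~10: for $x_0\in\mathbb S_{(s,u)}$ with $x_0=\Delta^{(s,u)}_{c_1c_2\dots}$ one has $\{x_0\}=\bigcap_{n=1}^{\infty}\Delta^{(s,u)}_{c_1\dots c_n}$, which follows at once from the nesting $\Delta^{(s,u)}_{c_1\dots c_n}=\bigcup_{i\ne u}\Delta^{(s,u)}_{c_1\dots c_ni}$ (Property~4) together with $d(\Delta^{(s,u)}_{c_1\dots c_n})=s^{-(c_1+\dots+c_n)}d(\mathbb S_{(s,u)})\to0$ (Property~2). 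Writing $I^{(s,u)}_{c_1\dots c_n}$ for the closed interval whose endpoints coincide with those of the cylinder $\Delta^{(s,u)}_{c_1\dots c_n}$, and putting
$$
E_k=\bigcup_{\substack{c_1,\dots,c_k\in A_0\\ c_i\ne u}} I^{(s,u)}_{c_1\dots c_k},
$$
this yields $\mathbb S_{(s,u)}=\bigcap_{k=1}^{\infty}E_k$.

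For zero Lebesgue measure I would use the covering $\mathbb S_{(s,u)}\subseteq E_k$ directly. Since $I^{(s,u)}_{c_1\dots c_k}$ and $\Delta^{(s,u)}_{c_1\dots c_k}$ share endpoints, Property~2 gives $d\big(I^{(s,u)}_{c_1\dots c_k}\big)=d_0\,s^{-(c_1+\dots+c_k)}$ with $d_0=d(\mathbb S_{(s,u)})$, so by subadditivity
$$
\lambda(E_k)\le d_0\!\!\sum_{\substack{c_1,\dots,c_k\in A_0\\ c_i\ne u}}\! s^{-(c_1+\dots+c_k)}=d_0\,\sigma_u^{\,k},\qquad \sigma_u:=\sum_{\substack{i=1\\ i\ne u}}^{s-1}\frac{1}{s^{i}}=\sigma-\frac{1}{s^{u}}.
$$
As $\sigma_u<\sigma<1$, letting $k\to\infty$ forces $\lambda(\mathbb S_{(s,u)})=0$. (One could instead reproduce the ``removed gaps form a geometric progression'' computation of the $u=0$ proof, but that route additionally needs the pairwise disjointness of the $I^{(s,u)}_{c_1\dots c_k}$, which the covering bound avoids.)

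For nowhere density and perfectness I would argue as in the $u=0$ case. Nowhere density: since cylinder diameters tend to $0$, every subinterval of $[\inf\mathbb S_{(s,u)},\sup\mathbb S_{(s,u)}]$ contains some cylinder $\Delta^{(s,u)}_{c_1\dots c_n}$ of sufficiently large rank; Property~5 then produces, between consecutive sibling cylinders, an interval free of points of $\mathbb S_{(s,u)}$, so every subinterval contains a subinterval disjoint from $\mathbb S_{(s,u)}$ and the set has empty interior. Perfectness splits as usual: $\mathbb S_{(s,u)}=\bigcap_kE_k$ is closed, being an intersection of finite unions of segments; and it has no isolated point because, given $x\in\mathbb S_{(s,u)}$ and an interval $P\ni x$, the rank-$n$ segment $J_n\subseteq I^{(s,u)}_{c_1\dots c_n}$ containing $x$ lies inside $P$ for large $n$, while the endpoint of $J_n$ distinct from $x$ is again a point of $\mathbb S_{(s,u)}$, since by Property~1 that endpoint is realized by an extremal tail which is an admissible sequence of digits in $A_0\setminus\{u\}$. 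Thus $x$ is a limit point, and a closed set without isolated points is perfect.

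The step needing real care is the separation of sibling cylinders in the non-monotone regime $u\in\{2,\dots,s-3\}$ of Property~5, where the cylinders increase in the digit $p$ while $p+1\le u$ but decrease once $u<p$, with the forbidden digit $u$ sitting in between. Here I must check that the two blocks of present cylinders (digits below $u$ and digits above $u$) do not interleave and that a genuine gap occupies the position of the missing digit $u$, so that all existing sibling cylinders are pairwise disjoint and separated by $\mathbb S_{(s,u)}$-free intervals. This is only bookkeeping resting on the inequalities already in Property~5 rather than a new idea; the monotone cases $u\in\{0,1\}$ and $u\in\{s-2,s-1\}$ give the separations immediately.
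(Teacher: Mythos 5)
Your argument is correct, but it is not the paper's: the paper proves this theorem (the case $u\in A_0$) by a two-line reduction rather than by redoing the cylinder machinery. Since $u\neq 0$ and every admissible digit $\alpha_n$ is nonzero, the digit $0$ never occurs in the $s$-adic representation of any element, so $\mathbb S_{(s,u)}\subset C[s,A_0]=\{x: x=\Delta^s_{\alpha_1\alpha_2\ldots\alpha_n\ldots},\ \alpha_n\in A_0\}$; this classical Cantor-type set is nowhere dense and of zero Lebesgue measure, and both properties are inherited by arbitrary subsets (nowhere density trivially, nullity by completeness of Lebesgue measure), which disposes of two of the three claims at once. Perfectness is then handled ``by analogy'' with the $u=0$ proof, which is exactly your closed-set-plus-no-isolated-points argument. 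Comparing the two routes: the paper's reduction is short, but it works only because $u\neq 0$ (for $u=0$ the digit $0$ does appear, which is why Section 3 needed the full cylinder analysis), and it outsources the standard facts about $C[s,A_0]$; your proof is self-contained and uniform in $u$, and your covering estimate $\lambda(E_k)\le d_0\,\sigma_u^{\,k}\to 0$ is actually cleaner than the paper's own $u=0$ computation, which sums the removed gaps as a geometric series and tacitly relies on disjointness of the rank-$k$ intervals. One simplification you missed: the ``bookkeeping'' you flag for the non-monotone regime $u\in\{2,\dots,s-3\}$ of Property 5 is unnecessary, because once you have closedness and $\lambda(\mathbb S_{(s,u)})=0$ --- both of which you obtain without Property 5 --- nowhere density is automatic, since a closed Lebesgue-null set has empty interior. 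So your proposal already contains a complete proof that sidesteps the only delicate point you identify in it.
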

\begin{proof} Since $\mathbb S_{(s,u)} \subset C[s,A_0]$ for all $u=\overline{1,s-1}$, where $ A_0=\{1,2,...,s-1\} $, we see that $\mathbb S_{(s,u)}$ is a nowhere dense set of zero Lebesgue measure. The fact that $\mathbb S_{(s,u)}$ is a perfect set can be proved by analogy with the proof of this fact for $\mathbb S_{(s,0)}$. 
\end{proof}

\section{Fractal properties of  $\mathbb  S_{(s,u)}$}

\begin{theorem}
The set  $\mathbb S_{(s,u)} $ is a self-similar fractal and the Hausdorff-Besicovitch dimension $\alpha_0 (\mathbb S_{(s,u)})$ of this set satisfies the following equation 
$$
\sum _{p_i \ne u, p_i \in A_0} {\left(\frac{1}{s}\right)^{p_i \alpha_0}}=1.
$$
\end{theorem}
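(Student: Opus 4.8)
\emph{The plan} is to realise $\mathbb S_{(s,u)}$ as the attractor of an iterated function system of similarities whose ratios are exactly the numbers $s^{-p_i}$ occurring in the asserted equation, and then to read off the dimension from the self-similar dimension formula. For each digit $i\in A_0\setminus\{u\}$ I would introduce the prepending map $\varphi_i$ sending $x=\Delta^{(s,u)}_{c_1c_2\dots}$ to $\Delta^{(s,u)}_{ic_1c_2\dots}$. Using the series form of $\mathbb S_{(s,u)}$ a direct computation gives
$$
\varphi_i(x)=\frac{x}{s^i}+\frac{u}{s-1}\left(1-\frac{1}{s^i}\right)+\frac{i-u}{s^i},
$$
so that each $\varphi_i$ is an affine contraction with ratio $r_i=s^{-i}$, consistently with the diameter identities of the cylinder lemma. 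Property 4 of that lemma, applied at rank $0$, reads $\mathbb S_{(s,u)}=\bigcup_{i\ne u}\varphi_i(\mathbb S_{(s,u)})$, which is precisely the statement that $\mathbb S_{(s,u)}$ is self-similar with generating system $\{\varphi_i\}$.

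Next I would verify \emph{the open set condition}. Take $V$ to be the open interval $(\inf\mathbb S_{(s,u)},\sup\mathbb S_{(s,u)})$, the interior of the convex hull. Each image $\varphi_i(V)$ is the open interval spanned by the rank-one cylinder $\Delta^{(s,u)}_i$, hence lies in $V$; and the separation relationships of Property 5 show that distinct rank-one cylinders are kept apart by genuine gaps, so the intervals $\varphi_i(V)$, $i\in A_0\setminus\{u\}$, are pairwise disjoint. Thus $\bigcup_i\varphi_i(V)\subset V$ with the union disjoint, and the open set condition holds.

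With self-similarity and the open set condition available, the Moran--Hutchinson theorem yields that the Hausdorff--Besicovitch dimension $\alpha_0=\alpha_0(\mathbb S_{(s,u)})$ is the unique positive solution of $\sum_i r_i^{\alpha_0}=1$, that is,
$$
\sum_{p_i\ne u,\, p_i\in A_0}\left(\frac{1}{s}\right)^{p_i\alpha_0}=1 ,
$$
which is the required equation; existence and uniqueness of $\alpha_0$ follow since the left-hand side is continuous and strictly decreasing in $\alpha_0$, exceeding $1$ at $\alpha_0=0$ and tending to $0$ as $\alpha_0\to\infty$ (and one checks it is below $1$ already at $\alpha_0=1$, so $0<\alpha_0<1$). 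As $\mathbb S_{(s,u)}$ is nowhere dense and perfect, its topological dimension is $0<\alpha_0$, so the set is a fractal in the sense of the Introduction.

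If one prefers to avoid the general theorem, the two inequalities can be obtained directly from the cylinder data. \emph{The upper bound} is immediate: covering $\mathbb S_{(s,u)}$ by rank-$n$ cylinders and using $d(\Delta^{(s,u)}_{c_1\dots c_n})=s^{-(c_1+\dots+c_n)}d(\mathbb S_{(s,u)})$ together with the Moran equation makes $\sum d(\Delta^{(s,u)}_{c_1\dots c_n})^{\alpha_0}$ equal to $d(\mathbb S_{(s,u)})^{\alpha_0}$ for every $n$, whence $\mathcal H^{\alpha_0}(\mathbb S_{(s,u)})\le d(\mathbb S_{(s,u)})^{\alpha_0}<\infty$ and $\dim_H\mathbb S_{(s,u)}\le\alpha_0$. \emph{The main obstacle is the lower bound}: I would place on $\mathbb S_{(s,u)}$ the self-similar probability measure $\mu$ assigning mass $\prod_{k=1}^n s^{-c_k\alpha_0}$ to $\Delta^{(s,u)}_{c_1\dots c_n}$ and establish a uniform estimate $\mu(B(x,r))\le Cr^{\alpha_0}$. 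This is exactly where the gap estimates of Property 5 are indispensable, since they bound the number of cylinders of comparable diameter that a ball of radius $r$ can meet; the mass distribution principle then gives $\dim_H\mathbb S_{(s,u)}\ge\alpha_0$ and closes the argument.
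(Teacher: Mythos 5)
Your proof is correct and follows the same underlying route as the paper's: decompose the set into its rank-one cylinders, observe that each is a copy of the whole set scaled by $s^{-i}$ for $i\in A_0\setminus\{u\}$, and read the dimension off the Moran equation. The difference is one of completeness, and it matters. The paper merely asserts that a compact self-similar subset of $\mathbb R^1$ has Hausdorff--Besicovitch dimension equal to its self-similar dimension; this is false in general without a separation hypothesis (overlapping pieces can push the Hausdorff dimension strictly below the similarity dimension), so the paper's invocation of the Moran formula is left unjustified, and the case $u\in A_0$ is dismissed with ``by analogy''. You close exactly this gap: you write the similarity maps $\varphi_i$ explicitly (your formula for $\varphi_i$ is correct, as one checks from the series representation \eqref{S(s,u)2}), you verify the open set condition with $V$ the interior of the convex hull using the gap estimates of Property 5, and you treat general $u$ directly; your closing sketch (covering by rank-$n$ cylinders for the upper bound, mass distribution principle for the lower bound) is the self-contained version of the same argument and is the right fallback if one does not want to cite Hutchinson's theorem. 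Two small caveats. First, pairwise disjointness of all rank-one images needs slightly more than Property 5 literally states: that property compares only consecutive digits $p$ and $p+1$, so for $u\in\{2,\dots,s-3\}$ the separation between the increasing block of cylinders with digits below $u$ and the decreasing block with digits above $u$ must be supplied by an additional (entirely analogous) computation. Second, in the degenerate case $s=3$, $u\in\{1,2\}$ the set $\mathbb S_{(s,u)}$ is a single point, so your claims that the Moran sum exceeds $1$ at $\alpha_0=0$ and that $\alpha_0>0$ fail there --- though the paper (including its uncountability lemma for $u\in A_0$) overlooks the same case.
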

\begin{proof}
Consider the set $\mathbb S_{(s,0)}$. Since 
$ \mathbb S_{(s,0)} \subset I_0 $ and $ \mathbb S_{(s,0)}$ is a perfect set, we obtain that  $\mathbb S_{(s,0)}$ is a compact set. In addition, 
$$
\mathbb S_{(s,0)}=[I_1 \cap \mathbb S_{(s,0)}]\cup [I_2 \cap \mathbb S_{(s,0)}]\cup\ldots\cup[I_{s-1}\cap \mathbb S_{(s,0)}]
$$
and
$$
 {[I_1 \cap \mathbb S_{(s,0)}]} \stackrel{s^{-1}}{\sim}\mathbb S_{(s,0)}, {[I_2 \cap \mathbb S_{(s,0)}]} \stackrel{s^{-2}}{\sim}\mathbb S_{(s,0)}, \dots, {[I_{s-1} \cap \mathbb S_{(s,0)}]} \stackrel{s^{-(s-1)}}{\sim}\mathbb S_{(s,0)}.
$$
Since the set $\mathbb S_{(s,0)}$ is a compact self-similar set of space  $ \mathbb R^1 $, we have that the self-similar dimension of this set is equal to the Hausdorff-Besicovitch dimension of $\mathbb S_{(s,0)}$. So the set  $\mathbb S_{(s,0)} $ is a self-similar fractal, and its Hausdorff-Besicovitch dimension $\alpha_0(\mathbb S_{(s,0)})$  satisfies the equation
$$
\left (\frac{1}{s}\right)^{\alpha_0}+\left (\frac{1}{s}\right)^{2\alpha_0}+\left (\frac{1}{s}\right)^{3\alpha_0}+\dots+\left (\frac{1}{s}\right)^{(s-1)\alpha_0}=1.
$$

Fractal properties  of the set $\mathbb S_{(s,u)}$ ($u\in A_0$) can be formulated and  proved by analogy with the case of $\mathbb S_{(s,0)} $. However in the case of $\mathbb S_{(s,u)}$, the condition $\alpha_n \ne u$ for all $n \in \mathbb N$
should be taken into account.
\end{proof}

Consider the set of all numbers whose s-adic representations contain only combinations of s-adic digits that is using  in the s-adic representations of elements of $\mathbb S_{(s,u)}$.

\emph{ A set $  \tilde S $ } is the set of all numbers whose s-adic representations  contain only combinations of s-adic digits from the set
$$
\{1, 02, 003,\dots, {\underbrace{u...u}_{c-1}}c, \dots, {\underbrace{(s-1)...(s-1)}_{s-3}}(s-2)\},
$$
where $c \in A_0, u \in A$, $c \ne u$.

\begin{theorem}
The set $ \tilde S $ is an uncountable, perfect, and nowhere dense set of zero Lebesgue measure.
\end{theorem}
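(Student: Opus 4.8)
The plan is to code $\tilde S$ by sequences over the finite set of admissible blocks. Put $B=\{\,\underbrace{u\ldots u}_{c-1}c:\ c\in A_0,\ u\in A,\ c\ne u\,\}$; a block $\underbrace{u\ldots u}_{c-1}c$ has length $c$, so $B$ contains the single block $1$ of length $1$ together with exactly $s-1$ blocks of each length $c\in\{2,\ldots,s-1\}$, and is finite. By definition every $x\in\tilde S$ has an $s$-adic representation that is a concatenation $w_1w_2w_3\ldots$ of blocks of $B$, so there is a coding map $\pi\colon B^{\infty}\to[0;1]$ sending $(w_n)$ to the number whose $s$-adic string is this concatenation, and $\tilde S=\pi(B^{\infty})$. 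If two block-sequences agree in their first $m$ entries their $s$-adic strings agree in at least the first $m$ digits, so $\pi$ is continuous; since $B^{\infty}$ is compact and $\pi$ is continuous, $\tilde S=\pi(B^{\infty})$ is compact and in particular \emph{closed}. This yields closedness without assuming that the blocks form a prefix code.

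For \emph{uncountability} I restrict $\pi$ to $\{1,02\}^{\infty}$: the blocks $1$ and $02$ start with different digits, so distinct sequences give distinct $s$-adic strings, and since each real has at most two $s$-adic representations $\pi$ is at most two-to-one there; as $\{1,02\}^{\infty}$ has the cardinality of the continuum, $\tilde S$ is uncountable. For \emph{zero Lebesgue measure} I cover $\tilde S$ by the sets $E_k=\bigcup\Delta^{s}_{w_1\ldots w_k}$, the union over all $(w_1,\ldots,w_k)\in B^{k}$, where $\Delta^{s}_{w_1\ldots w_k}$ is the ordinary $s$-adic cylinder of the concatenated word; plainly $\tilde S\subseteq E_k$. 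Subadditivity gives
$$
\lambda(E_k)\le\sum_{(w_1,\ldots,w_k)\in B^{k}}s^{-(|w_1|+\cdots+|w_k|)}=\left(\sum_{w\in B}s^{-|w|}\right)^{\!k},
$$
and the decisive computation is
$$
\sum_{w\in B}s^{-|w|}=\frac1s+\sum_{c=2}^{s-1}(s-1)s^{-c}=\frac2s-\frac1{s^{s-1}}<1\qquad(s>2).
$$
Hence $\lambda(E_k)\to0$ and $\lambda(\tilde S)=0$. A closed set of measure zero has empty interior, so $\tilde S$ is \emph{nowhere dense}.

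To see that $\tilde S$ is \emph{perfect} it remains to exclude isolated points. For each $w\in B$ the affine similarity $f_w(y)=s^{-|w|}y+t_w$, where $t_w$ is the value of the string $w000\ldots$, is injective, maps $\tilde S$ into the cylinder of $w$, and $\tilde S=\bigcup_{w\in B}f_w(\tilde S)$. Given $x=\pi(w_1,w_2,\ldots)$ and any $m$, the composition $g_m=f_{w_1}\circ\cdots\circ f_{w_m}$ is injective, carries $\tilde S$ into $\Delta^{s}_{w_1\ldots w_m}$ (a cylinder of diameter at most $s^{-m}$), and satisfies $x\in g_m(\tilde S)$; since $g_m(\tilde S)$ is a faithful copy of $\tilde S$ and $|\tilde S|\ge2$, it contains some $y_m\ne x$ in that same cylinder, whence $|y_m-x|\le s^{-m}\to0$ with $y_m\in\tilde S\setminus\{x\}$. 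Thus $x$ is not isolated, and being closed with no isolated points $\tilde S$ is perfect.

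The genuine obstacle is that the blocks do not form a prefix code---for example $1$ is an initial segment of $12$---so $\pi$ and the cylinder cover are not injective and distinct words may produce nested cylinders. The argument is arranged to be immune to this: closedness comes from compactness of $B^{\infty}$ rather than unique decodability, the measure estimate uses only subadditivity (so over-counting nested cylinders is harmless), uncountability is read off the prefix-free pair $\{1,02\}$, and the no-isolated-point step needs only $|\tilde S|\ge2$. The sole nontrivial calculation is the geometric-series identity $\sum_{w\in B}s^{-|w|}=\tfrac2s-\tfrac1{s^{s-1}}$, and the single inequality $\tfrac2s-\tfrac1{s^{s-1}}<1$ delivers both the zero-measure and, through closedness, the nowhere-dense conclusions at once.
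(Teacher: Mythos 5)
Your proof is correct, and it takes a genuinely different route from the paper. The paper gives no self-contained argument for this theorem at all: it simply remarks that the properties ``follow from the fractal properties'' of $\tilde S$, i.e.\ from the next theorem, which identifies $\tilde S$ as a self-similar fractal whose Hausdorff--Besicovitch dimension $\alpha_0$ satisfies
$$
\left(\tfrac1s\right)^{\alpha_0}+(s-1)\left(\tfrac1s\right)^{2\alpha_0}+\dots+(s-1)\left(\tfrac1s\right)^{(s-1)\alpha_0}=1,
$$
and that theorem is in turn reduced to the general Theorem~\ref{th: theorem6} on sets determined by finitely many digit combinations. So in the paper, measure zero, nowhere density, perfectness and uncountability are meant to be read off from the self-similar Cantor-type structure and the fact that $\alpha_0<1$. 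Your argument replaces this dimension machinery with elementary coding: compactness of $B^{\infty}$ plus continuity of $\pi$ gives closedness; the subadditive cover estimate with $\sum_{w\in B}s^{-|w|}=\tfrac2s-\tfrac1{s^{s-1}}<1$ gives measure zero (note that this sum is exactly the left side of the paper's dimension equation evaluated at $\alpha_0=1$, so your one inequality is the elementary shadow of the paper's assertion $\alpha_0<1$); the similarity maps $f_w$ rule out isolated points; and the prefix-free pair $\{1,02\}$ gives uncountability. What your approach buys is rigor precisely where the paper is vulnerable: since the block code is not prefix-free, the pieces $I_{\sigma_i}\cap E$ in the paper's self-similar decomposition may overlap, and the identification of the similarity dimension with the Hausdorff dimension (hence the exact dimension formula) needs a separation condition that the paper never verifies; your argument, resting only on subadditivity and an at-most-two-to-one coding, is immune to such overlaps. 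What the paper's approach would buy, were the separation issue settled, is stronger quantitative information --- the exact dimension --- from which the qualitative properties also follow.
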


Properties of the set  $ \tilde S $ follow from the following fractal properties of this set. 

\begin{theorem}
 The set $ \tilde S $ is a self-similar fractal, and its Hausdorff-Besicovitch dimension $\alpha_0$ satisfies the following equation 
$$
\left(\frac{1}{s}\right)^{\alpha_0}+(s-1)\left(\frac{1}{s}\right)^{2\alpha_0}+(s-1)\left(\frac{1}{s}\right)^{3\alpha_0}+\dots+(s-1)\left(\frac{1}{s}\right)^{(s-1)\alpha_0}=1.
$$
\end{theorem}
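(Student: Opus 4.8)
The plan is to realize $\tilde S$ as the attractor of an explicit finite system of contracting similarities and then to read off its Hausdorff--Besicovitch dimension from Moran's equation, exactly as was done for $\mathbb S_{(s,0)}$ in the previous theorem; the only genuinely new feature is that the contraction ratios are no longer all equal. First I would list the admissible blocks. Writing a block $\underbrace{u\ldots u}_{c-1}c$, one sees that its length is $c$, that for $c=1$ there is a single block (the string $1$, since no copy of $u$ occurs), and that for each $c\in\{2,3,\dots,s-1\}$ there are exactly $s-1$ blocks, one for every choice of $u\in A$ with $u\ne c$. For a block $b=b_1b_2\ldots b_c$ I would define the similarity
$$
\varphi_b(x)=\frac{x}{s^{c}}+\sum_{j=1}^{c}\frac{b_j}{s^{j}},\qquad x\in[0,1],
$$
whose contraction ratio is $s^{-c}$. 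Since every element of $\tilde S$ is an infinite concatenation of blocks, prepending its first block $b$ shows $y\in\varphi_b(\tilde S)$, while conversely $\varphi_b(\tilde S)\subset\tilde S$ for every admissible $b$; hence $\tilde S=\bigcup_{b}\varphi_b(\tilde S)$.

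Next I would verify that $\tilde S$ is compact. It is bounded, being contained in $[0,1]$, and it is closed: writing $\tilde S=\bigcap_{k}E_k$, where $E_k$ is the finite union of the closed rank-$k$ cylinder segments and $E_{k+1}\subset E_k$, the intersection is closed. This is verbatim the argument used for $\mathbb S_{(s,0)}$. Thus $\tilde S$ is a non-empty compact set satisfying the self-referential equation above, so by Hutchinson's theorem it is precisely the attractor of the finite family $\{\varphi_b\}$, and is therefore a compact self-similar subset of $\mathbb R^1$.

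The main work, and the step I expect to be the chief obstacle, is the open set condition. I would establish the analogue of Lemma~\ref{lemma2}: compute $\inf$ and $\sup$ of each rank-$n$ cylinder, show that cylinders are nested with increasing rank, and --- most importantly --- that the first-rank cylinders $\varphi_b(\tilde S)$, indexed by the distinct blocks $b$, lie in pairwise disjoint closed subintervals separated by genuine gaps (the counterpart of Properties 7--9). Equivalently, I would exhibit a bounded open interval $U$ with $\varphi_b(U)\subset U$ for all $b$ and $\varphi_b(U)\cap\varphi_{b'}(U)=\varnothing$ for $b\ne b'$. This separation is the one point that requires care, since blocks of different lengths and different leading symbols $u$ must all be fitted into $U$ without overlap; the required inequalities are of exactly the type already verified in the proof of the cylinder lemma, only now carried out simultaneously over all admissible $u\in A$.

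Once the open set condition holds, $\tilde S$ is a self-similar fractal and, by Moran's theorem, its Hausdorff--Besicovitch dimension $\alpha_0$ equals the similarity dimension, i.e. the unique positive root of $\sum_b\bigl(s^{-c(b)}\bigr)^{\alpha_0}=1$, where $c(b)$ denotes the length of $b$. Collecting blocks by length --- one block of length $1$ and $s-1$ blocks of each length $c\in\{2,\dots,s-1\}$ --- turns this into
$$
\left(\frac{1}{s}\right)^{\alpha_0}+(s-1)\sum_{c=2}^{s-1}\left(\frac{1}{s}\right)^{c\alpha_0}=1,
$$
which is the asserted equation (the left-hand side is continuous and strictly decreasing in $\alpha_0$, so the root is unique). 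The uncountability, perfectness, nowhere density, and zero Lebesgue measure of $\tilde S$ claimed in the preceding theorem then follow along the way, exactly as for $\mathbb S_{(s,0)}$.
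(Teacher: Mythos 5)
Your proposal is correct and takes essentially the same approach as the paper: identify the admissible blocks (one of length $1$ and $s-1$ of each length $c\in\{2,\dots,s-1\}$) and run a Moran-type self-similarity argument on the decomposition of $\tilde S$ by its first block. The only organizational difference is that the paper simply counts the combinations and invokes its general Theorem \ref{th: theorem6} on sets defined by finitely many digit combinations, whereas you inline that argument via Hutchinson/Moran --- and you are in fact more explicit than the paper about the separation (open set condition) step, which the paper never verifies.
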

\begin{proof} 
The s-adic representation of an arbitrary element from $ \tilde S $ contains combinations of digits from the following  tuple:
$$
02, 003,\dots,\underbrace{0\ldots 00}_{s-2}(s-1);
$$
$$
1, 12, 113,\dots, \underbrace{1\ldots 11}_{s-2}(s-1);
$$
$$
223, 2224,\dots, \underbrace{2\ldots 22}_{s-2}(s-1);
$$
$$
\dots \dots \dots \dots \dots \dots \dots
$$
$$
u2, uu3,\dots,\underbrace{u\ldots uu}_{u-2}(u-1), \underbrace{u\ldots uu}_{u}(u+1), \dots, \underbrace{u\ldots uu}_{s-2}(s-1);
$$
$$
\dots \dots \dots \dots \dots \dots \dots
$$
$$
(s-1)2, (s-1)(s-1)3,\dots, \underbrace{(s-1)\ldots (s-1)(s-1)}_{s-3}(s-2).
$$
Here $s^2 -3s+3$ combinations of s-adic digits, i.e., the unique 1-digit combination  and $s-1$ k-digit combinations for all $k=\overline{2,s-1}$. Our statement follows from  Theorem \ref{th: theorem6}.
\end{proof}

\begin{theorem}
\label{th: theorem6}
Let  $E$ be a set whose elements represented by finite number of fixed combinations $\sigma_1, \sigma_2,\dots,\sigma_m$ of s-adic digits in the s-adic number system. Then the Hausdorff-Besicovitch dimension $\alpha_0$ of $E$ satisfies the following equation: 
$$
N(\sigma^1 _m)\left(\frac{1}{s}\right)^{\alpha_0}+N(\sigma^2 _m)\left(\frac{1}{s}\right)^{2\alpha_0}+\dots+N(\sigma^{k} _m)\left(\frac{1}{s}\right)^{k\alpha_0}=1,
$$
where $N(\sigma^k_m)$ is a number of k-digit combinations $\sigma^k_m$ from the set $\{\sigma_1, \sigma_2,\dots,\sigma_m\}$,
$k \in \mathbb N$, and $N(\sigma^1 _m)+N(\sigma^2 _m)+\dots+ N(\sigma^{k} _m)=m$.
\end{theorem}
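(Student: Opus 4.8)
The plan is to realize $E$ as the attractor of a finite iterated function system of similarities and then invoke the classical Moran (similarity-dimension) theorem (see \cite{Falconer1997}). To each fixed combination $\sigma_j$, a word $d_1d_2\dots d_{\ell_j}$ of length $\ell_j$ over the alphabet $A$, I would associate the affine contraction $f_j(x)=t_j+s^{-\ell_j}x$, where $t_j=\sum_{i=1}^{\ell_j}d_i/s^i$ is the s-adic value contributed by prepending the block and the ratio is $r_j=s^{-\ell_j}$, since every digit following the block is shifted by $\ell_j$ further powers of $s$. In this way each combination of length $k$ yields a similarity of ratio $s^{-k}$, and there are exactly $N(\sigma^k_m)$ of them.

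The first step is to establish the self-similar identity $E=\bigcup_{j=1}^m f_j(E)$. This follows from the defining property of $E$: an s-adic representation built only from the blocks $\sigma_1,\dots,\sigma_m$ begins with exactly one of these blocks, and deleting that first block leaves an admissible tail, hence again an element of $E$; conversely $f_j$ prepends the block $\sigma_j$ to an element of $E$, producing another admissible number. Since the $f_j$ are contractions of $\mathbb{R}^1$ and $E\subset[0;1]$ is bounded, the attractor is compact, while perfectness and the remaining topological structure can be obtained exactly as in the proof for $\mathbb{S}_{(s,0)}$.

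The decisive step is verifying the open set condition, which is what forces the similarity dimension to coincide with the Hausdorff--Besicovitch dimension. I would take $U$ to be the interior of the convex hull of $E$ and check that the images $f_j(U)$ lie in $U$ and are pairwise disjoint. Disjointness is precisely the content of the cylinder-separation estimates already established (the analogues of Properties 7--9 of Lemma \ref{lemma2}): distinct admissible first blocks produce cylinders separated by a gap interval containing no points of the set. I expect this to be the main obstacle, because for a completely arbitrary family of combinations the blocks need not be prefix-free and the images could overlap, making the similarity dimension overcount; the argument is clean exactly when the combinations form a uniquely decodable (prefix) code, which is the situation of the combinations $\underbrace{u\cdots u}_{c-1}c$ occurring in the preceding theorems.

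Granting the open set condition, Moran's theorem yields that $\alpha_0$ is the unique root of $\sum_{j=1}^m r_j^{\alpha_0}=1$, that is $\sum_{j=1}^m (s^{-\ell_j})^{\alpha_0}=1$. Collecting the summands according to the common block length $k$, of which there are $N(\sigma^k_m)$, converts this into $\sum_k N(\sigma^k_m)(1/s)^{k\alpha_0}=1$, the asserted equation; the bookkeeping identity $N(\sigma^1_m)+\dots+N(\sigma^k_m)=m$ simply records that every one of the $m$ blocks has been counted once.
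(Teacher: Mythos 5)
You take essentially the same route as the paper: split $E$ according to its first block, observe that each of the $m$ resulting pieces is similar to $E$ with ratio $s^{-k}$ (where $k$ is the length of the block), and identify the Hausdorff--Besicovitch dimension with the similarity dimension, i.e.\ with the root of the Moran equation. The paper's proof is exactly this and nothing more: it introduces the cylinders $\Delta^{(s,E)}_{\sigma_1\sigma_2\ldots\sigma_n}$, computes $d(\Delta^{(s,E)}_{\sigma_1\ldots\sigma_n})=s^{-N(\sigma_1+\cdots+\sigma_n)}\,d(E)$, writes $E=[I_{\sigma_1}\cap E]\cup\ldots\cup[I_{\sigma_m}\cap E]$ together with $[I_{\sigma^k_m}\cap E]\stackrel{s^{-k}}{\sim}E$, and declares the proof complete, tacitly invoking (as in the earlier theorem on $\mathbb{S}_{(s,0)}$) the principle that a compact self-similar subset of $\mathbb{R}^1$ has Hausdorff--Besicovitch dimension equal to its self-similar dimension. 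Where you go beyond the paper is in making the iterated function system explicit and isolating the open set condition as the step that licenses this identification.

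Your worry about that step is exactly right, and it is not a technicality: it is a genuine gap in the paper's own proof and indeed in the statement of the theorem. No separation is verified anywhere in the paper, and for an arbitrary finite family of blocks the claim is false. For instance, take $s=3$, $\sigma_1=1$, $\sigma_2=11$: the only infinite concatenation of these blocks is $111\ldots$, so $E=\{\Delta^3_{(1)}\}=\{1/2\}$ has dimension $0$, while the equation $(1/3)^{\alpha_0}+(1/3)^{2\alpha_0}=1$ has a positive root; with overlaps, the Moran value is only an upper bound. Some hypothesis must therefore be added --- e.g.\ that the blocks form a prefix code, which holds for the combinations $\underbrace{u\ldots u}_{c-1}c$, $c\neq u$, arising in the paper's applications, and which forces the first-level pieces to be pairwise disjoint compact sets (strong separation), hence gives the open set condition. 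Under such a hypothesis your proposal is a correct and strictly more rigorous version of the paper's argument; without it, neither your argument nor the paper's can be completed, because the asserted equation simply does not hold in that generality.
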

\begin{proof} Let  $\{\sigma_1, \sigma_2,\dots,\sigma_m\}$ be a set of fixed combinations of s-adic digits, and the s-adic representation of any number from $E$ contains only such combinations of digits. 

It is easy to see that there exist combinations $\sigma', \sigma''$ from the set $\{\sigma_1, \sigma_2,\dots,\sigma_m\}$ such that 
$\Delta^s _{\sigma^{'}\sigma^{'}...}=\inf E$, $\Delta^s _{\sigma^{''}\sigma^{''}...}=\sup E$,
and
$$
d(E)=\sup E - \inf E=\Delta^s _{\sigma^{''}\sigma^{''}...}-\Delta^s _{\sigma^{'}\sigma^{'}...}.
$$

\emph{A cylinder $ \Delta^{(s,E)} _{\sigma_1\sigma_2\ldots\sigma_n}$ of rank $n$ with base $\sigma_1\sigma_2\ldots\sigma_n$} is a set  formed by all numbers of $E$ with s-adic representations in which the first $n$ combinations of digits are fixed and  coincide with $\{\sigma_1,\sigma_2,\dots,\sigma_n\}$ respectively ($\sigma_j\in \{\sigma_1, \sigma_2,\dots,\sigma_m\}$ for all $j=\overline{1,n}$).

It is easy to see that
$$
d( \Delta^{(s,E)} _{\sigma_1\sigma_2...\sigma_n})=\frac{1}{s^{N(\sigma_1+\sigma_2+\dots+\sigma_n)}} d(E),~\mbox{де}
$$
where $N(\sigma_1+\sigma_2+\dots+\sigma_n)$ is a number of s-adic digits in the combination  $\sigma_1\sigma_2\ldots\sigma_n$. 

Since  $E$ is a Cantor type set, $ E \subset [\inf E; \sup E] $, and
$$
\frac{d( \Delta^{(s,E)} _{\sigma_1\sigma_2...\sigma_n\sigma_{n+1}})}{d( \Delta^{(s,E)} _{\sigma_1\sigma_2...\sigma_n})}=\frac{s^{N(\sigma_1+\sigma_2+\dots+\sigma_n)}}{s^{N(\sigma_1+\sigma_2+\dots+\sigma_n+\sigma_{n+1})}}=\frac{1}{s^{N(\sigma_{n+1})}},
$$
$$
E=[I_{\sigma_1} \cap E]\cup [I_{\sigma_2} \cap E]\cup\ldots\cup[I_{\sigma_m}\cap E],
$$
where
$I_{\sigma_i}=[\inf \Delta^{(s,E)} _{\sigma_i};\sup \Delta^{(s,E)} _{\sigma_i}]$ and $i=1,2,\dots,m,$
we have
$$
 {[I_{\sigma^1 _m} \cap E]} \stackrel{s^{-1}}{\sim}E, {[I_{\sigma^2 _m} \cap S]} \stackrel{s^{-2}}{\sim}E, {[I_{\sigma^3 _m} \cap E]} \stackrel{s^{-3}}{\sim}E,\dots, {[I_{\sigma^k _m} \cap E]} \stackrel{s^{-k}}{\sim}E.
$$
This completes the proof.
\end{proof}

\begin{remark} The following statements are true:
\begin{itemize}
\item  if there exists $k \in \mathbb N$ such that $N(\sigma^k _m)=s^k$, then $\alpha_0 (E)=1$;
\item if $m=1$, then $\alpha_0 (E)=0$.
\end{itemize}
\end{remark}

\section{Normal numbers on $\bigcup^{s-1} _{u=0} {\mathbb S_{(s,u)}}$}

Suppose that
$$
\mathbb  S_0=\bigcup^{s-1} _{u=0} {\mathbb S_{(s,u)}}.
$$

 \emph{The frequence of a digit  $i$ in the s-adic expansion of a number $x$} is a limit 
$$
\upsilon_{i}=\lim_{k \to \infty} {\frac{N_i (x,k)}{k}}~\mbox{(if there exists a limit from the right)},
$$
where $N_i (x,k)$ is a number of the digit $i$ $(i=\overline{0,s-1})$ up to the k-th place inclusively in the s-adic expansion, and 
$k^{-1}N_i (x,k)$ is \emph{the relative frequence of using of the digit $i$} in the s-adic expansion of $x$. 
 
A number $x \in [0;1]$  is called \emph{a normal number on the base $s$} whenever for each  $i \in \{0, 1,\dots, s-1\}$ there exists
the frequence and $\upsilon_{i}(x)=s^{-1}$.

\begin{theorem} Let $s>2$ be a fixed positive integer; then the following statements are true:
\begin{itemize}
\item if $s=3$, then the set of normal numbers from $ \mathbb  S_0 $  is a continuum set (a subset of $\mathbb S_{(3,0)}$), and its Hausdorff-Besicovitch dimension $\alpha_0$ satisfies the following inequalities
$$
\frac{1}{3}\log_3 2\le  \alpha_0 \le~\log_3 {\frac{2}{\sqrt{5}-1}}; 
$$
\item if $s\ne 3$, then the set of normal numbers from $ \mathbb  S_0 $  is an empty set.
\end{itemize}
\end{theorem}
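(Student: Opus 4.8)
The plan is to reduce the whole problem to the single set $\mathbb S_{(3,0)}$ and then to exploit one rigid arithmetic identity forced by the block structure of the $\Delta'$-representation.

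First I would record which digits can occur. In $\mathbb S_{(s,u)}$ the representation is a concatenation of blocks, the $n$-th block consisting of $\alpha_n-1$ copies of the filler $u$ followed by the marker $\alpha_n\in A_0\setminus\{u\}$. Hence the only digits that ever appear are $u$ together with the markers in $\{1,\dots,s-1\}\setminus\{u\}$; in particular the digit $0$ occurs if and only if $u=0$. Since a normal number must have $\upsilon_0=s^{-1}>0$, no element of $\mathbb S_{(s,u)}$ with $u\ne 0$ can be normal, so every normal number of $\mathbb S_0$ lies in $\mathbb S_{(s,0)}$. This already yields the inclusion asserted in the case $s=3$.

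Next I would set up frequency bookkeeping on $\mathbb S_{(s,0)}$. After the first $N$ blocks, writing $L_N=\alpha_1+\dots+\alpha_N$ for the number of digits used, the digit $0$ occurs $L_N-N$ times and each marker $j\in\{1,\dots,s-1\}$ occurs $M_j^{(N)}:=\#\{n\le N:\alpha_n=j\}$ times. Because a block with marker $j$ has length exactly $j$, one has the identity $\sum_{j=1}^{s-1} jM_j^{(N)}=L_N$, i.e. $\sum_{j=1}^{s-1} j\,\upsilon_j^{(N)}=1$ with $\upsilon_j^{(N)}=M_j^{(N)}/L_N$. If $x$ is normal then, evaluating the defining limits along the subsequence $k=L_N$, each $\upsilon_j^{(N)}\to s^{-1}$, and the identity forces $\tfrac{1}{s}\sum_{j=1}^{s-1} j=\tfrac{s-1}{2}=1$, i.e. $s=3$. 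This settles the second bullet: for $s\ne 3$ the set of normal numbers of $\mathbb S_0$ is empty. (Block lengths are bounded by $s-1$, so replacing $k=L_N$ by general $k$ perturbs each frequency only by $O(1/k)$, which is what one needs for the $s=3$ existence direction below.) For $s=3$ the upper bound is then immediate from monotonicity of Hausdorff--Besicovitch dimension: the set $\mathcal N$ of normal numbers is contained in $\mathbb S_{(3,0)}$, whose dimension, by the fractal-dimension theorem for $\mathbb S_{(s,0)}$ established above, solves $(1/3)^{\alpha}+(1/3)^{2\alpha}=1$; putting $t=3^{-\alpha}$ gives $t^{2}+t-1=0$, $t=\tfrac{\sqrt5-1}{2}$, hence $\alpha=\log_3\tfrac{2}{\sqrt5-1}$, the stated upper bound.

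For the lower bound and the continuum cardinality I would exhibit an explicit self-similar subset of $\mathcal N$. Group the markers in consecutive pairs, allowing each pair to be either $12$ or $21$; in base $3$ the pair $12$ produces the string $102$ and the pair $21$ produces $021$, each of length $3$ and each containing exactly one $0$, one $1$ and one $2$. Every resulting number is normal, since after $m$ pairs the first $3m$ digits contain exactly $m$ of each symbol and block lengths are bounded. The set $\mathcal N_0$ of all such numbers is the attractor of the two similarities of ratio $3^{-3}$ prepending $102$ and $021$; their cylinders $[11/27,12/27]$ and $[7/27,8/27]$ are disjoint, so the open set condition holds and $\dim_H\mathcal N_0$ solves $2\cdot(3^{-3})^{d}=1$, giving $d=\log_{27}2=\tfrac13\log_3 2$. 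Since $\{12,21\}^{\infty}$ is uncountable, $\mathcal N_0$, and hence $\mathcal N$, has cardinality of the continuum, and $\tfrac13\log_3 2=\dim_H\mathcal N_0\le\dim_H\mathcal N$ delivers the lower bound. The main obstacle is precisely this lower bound: one must produce a subset that is simultaneously self-similar, so that its dimension is computable, and composed entirely of provably normal numbers, and then verify the separation needed for the dimension formula; the pairing device is what reconciles these two demands.
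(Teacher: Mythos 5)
Your proposal is correct and takes essentially the same route as the paper: reduction to $\mathbb S_{(s,0)}$ via the frequency of the digit $0$, a block-counting argument forcing $s=3$ (your identity $\sum_{j} j\,\upsilon_j=1$ is an equivalent reformulation of the paper's computation that the zero-frequency must equal $\tfrac{(s-2)(s-1)}{2s}$), and the sandwich between the set of concatenations of $021$ and $102$ --- exactly the paper's $S'_3$ --- and $\mathbb S_{(3,0)}$, with the same two dimension values. The only differences are cosmetic: you compute the lower-bound dimension via the standard Moran/open-set-condition theorem rather than the paper's theorem on digit-combination sets, and you verify explicitly (where the paper leaves it implicit) that the elements of the subset are indeed normal and that the subsequence evaluation of the frequency limits is legitimate.
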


\begin{proof} If a number $x$ does not belong to the set of normal numbers from $ \mathbb  S_0$, then there exists sequence (at least one) of digits of $x$ such that the frequence of one of the digits does not exist or is not equal to $ s^{-1}$. Find such sequence $(k_n)$. 
 
Suppose that $k_n=ns$, i.e.,  choose $ns$ digits. Here $n \in \mathbb N$, $s \ge 3$, and 
$$
\bigtriangleup k=k_{n+1}-k_n=ts,
$$
where $t$ is a fixed positive integer.

Let $x$ be a normal number. Then 
$$
\lim_{k_{n+1 }\to \infty} {\frac{N_{1}(x,k_{n+1})}{k_{n+1}}}=\lim_{k_{n+1 }\to \infty} {\frac{N_{2}(x,k_{n+1})}{k_{n+1}}}=\dots=\lim_{k_{n+1 }\to \infty} {\frac{N_{s-1}(x,k_{n+1})}{k_{n+1}}}=\frac{1}{s},
$$
where $k_{n+1}=(n+t)s$. Hence, 
$$
N_{1}(x,k_{n+1}) \to (n+t),
$$
$$
N_{2}(x,k_{n+1}) \to (n+t),
$$
$$
\dots\dots\dots
$$
$$
N_{s-1}(x,k_{n+1}) \to (n+t).
$$

Find $N_{0}(x,k_{n+1})$ by the definition of $\mathbb S_{(s,u)}$. Since $N_{0}(x,k_{n+1}) \ne 0$ holds  only for $\mathbb S_{(s,0)}$  (i.e., the set of normal numbers of $ \mathbb  S_0$ is a subset of $\mathbb S_{(s,0)}$), we obtain 
$$
N_{0}(x,k_{n+1}) \to A=N_{2}(x,k_{n+1})+2N_{3}(x,k_{n+1})+\dots+(s-2)N_{s-1}(x,k_{n+1}),
$$
$$
A= \sum^{s-2} _{j=1} {[j(n+t)]}=\frac{(s-2+1)(s-2)(n+t)}{2}=\frac{(s-2)(s-1)(n+t)}{2}.
$$
Therefore,
$$
\lim_{k_{n+1 }\to \infty} {\frac{N_{0}(x,k_{n+1})}{k_{n+1}}}=\lim_{(n+t)\to \infty}{\frac{(s-2)(s-1)(n+t)}{2s(n+t)}}=\frac{(s-2)(s-1)}{2s} \ne \frac{1}{s}
$$
for any $ s>3$.  If $s=3$, then we have the equality. 

By $S_N$ denote the set of normal numbers of $\mathbb  S_0$. It is easy to see that
$$
\alpha_0(S^{'} _3)\le\alpha_0\left(S_N\right)\le\alpha_0\left(\mathbb S_{(3,0)}\right), 
$$
where $S^{'} _3=\{x: x=\Delta^3 _{\sigma_1\sigma_2\ldots\sigma_n\ldots}, \sigma\in\{021, 102\}\}$. 
It follows from Theorem \ref{th: theorem6} that $\alpha_0\left(\mathbb S_{(3,0)}\right)=\log_3 {\frac{2}{\sqrt{5}-1}}$ and the Hausdorff-Besicovitch  dimension of $S^{'} _3$ satisfies the equation 
$$
2\left(\frac{1}{3}\right)^{3\alpha_0}=1.
$$
So $\alpha_0(S^{'} _3)=\frac{1}{3}\log_3 2.$
\end{proof}

\end{document}